\newcommand{\fp}{\frak{p}}
\newcommand{\QQ}{\mathbb{Q}}
\newcommand{\ZZ}{\mathbb{Z}}
\newcommand{\sfC}{\mathsf{C}}
\newcommand{\sfK}{\mathsf{K}}
\newcommand{\sfT}{\mathsf{T}}
\newcommand{\sfW}{\mathsf{W}}
\def\im{\operatorname{im}}
\def\coker{\operatorname{coker}}
\def\ker{\operatorname{ker}}
\def\id{\operatorname{id}}
\def\pd{\operatorname{pd}}
\def\fd{\operatorname{fd}}
\def\id{\operatorname{id}}
\def\invlim{\varprojlim}
\def\Spec{\operatorname{Spec}}
\def\Hom{\operatorname{Hom}}
\def\Ext{\operatorname{Ext}}
\def\K{\mathsf{K}}
\def\Ktac{\mathsf{K}_{\operatorname{tac}}}
\def\DFtac{\mathsf{D}_{\operatorname{F-tac}}}
\def\Kpac{\mathsf{K}_{\operatorname{pac}}}
\def\D{\mathsf{D}}
\def\MF{\mathsf{MF}}
\def\HMF{\mathsf{HMF}}
\def\F{\mathsf{F}}
\def\HF{\mathsf{HF}}
\newcommand{\Inj}[1]{\mathsf{Inj}(#1)}
\newcommand{\prj}[1]{\mathsf{prj}(#1)}
\newcommand{\Prj}[1]{\mathsf{Prj}(#1)}
\newcommand{\Flat}[1]{\mathsf{Flat}(#1)}
\newcommand{\Cot}[1]{\mathsf{Cot}(#1)}
\newcommand{\FC}[1]{\mathsf{FlatCot}(#1)}
\newcommand{\Mod}[1]{\mathsf{Mod}(#1)}
\newcommand{\Cy}[2]{\operatorname{Z}_{#1}(#2)}
\newcommand{\GPdim}{\operatorname{Gpd}}
\newcommand{\GFdim}{\operatorname{Gfd}}
\newcounter{env}
\numberwithin{env}{section}
\theoremstyle{plain}
\newtheorem{thm}[env]{Theorem}          \newtheorem*{thm*}{Theorem}
\newtheorem{prp}[env]{Proposition}      \newtheorem*{prp*}{Proposition}
\newtheorem{cor}[env]{Corollary}        \newtheorem*{cor*}{Corollary}
\newtheorem{lem}[env]{Lemma}            \newtheorem*{lem*}{Lemma}
          \newtheorem*{cnj*}{Conjecture}
            \newtheorem*{fct*}{Fact}
\theoremstyle{definition}
\newtheorem{dfn}[env]{Definition}       \newtheorem*{dfn*}{Definition}
     \newtheorem*{con*}{Construction}
      \newtheorem*{obs*}{Observation}
\newtheorem{rmk}[env]{Remark}           \newtheorem*{rmk*}{Remark}
\newtheorem{exa}[env]{Example}          \newtheorem*{exa*}{Example}
         \newtheorem*{exe*}{Exercise}
         \newtheorem*{qst*}{Question}
            \newtheorem{stp*}{Setup}
            \newtheorem*{set*}{Setting}
\title[Matrix factorizations for self-orthogonal categories of modules]{Matrix factorizations for self-orthogonal categories of modules}
\date{December 3, 2019}                                           
\keywords{Matrix factorization, totally acyclic complex, flat-cotorsion module}
\subjclass[2010]{Primary 13D02. Secondary 18E30, 13D09.}
\author{Petter Andreas Bergh} 
\address{Petter Andreas Bergh, Institutt for matematiske fag, NTNU, N-7491 Trondheim, Norway}
\email{petter.bergh@ntnu.no}
\author{Peder Thompson} 
\address{Peder Thompson, Institutt for matematiske fag, NTNU, N-7491 Trondheim, Norway}
\email{peder.thompson@ntnu.no}
\begin{document}
\maketitle
\begin{abstract}
For a commutative ring $S$ and self-orthogonal subcategory $\sfC$ of $\Mod{S}$, we consider matrix factorizations whose modules belong to $\sfC$. Let $f\in S$ be a regular element. If $f$ is $M$-regular for every $M\in \sfC$, we show there is a natural embedding of the homotopy category of $\sfC$-factorizations of $f$ into a corresponding homotopy category of totally acyclic complexes. Moreover, we prove this is an equivalence if $\sfC$ is the category of projective or flat-cotorsion $S$-modules. Dually, using divisibility in place of regularity, we observe there is a parallel equivalence when $\sfC$ is the category of injective $S$-modules.
\end{abstract}

\section*{Introduction}
\noindent
Matrix factorizations of a nonzero element $f$ in a regular local ring $Q$ were introduced by Eisenbud \cite{Eis80} and shown to correspond to maximal Cohen-Macaulay $Q/(f)$-modules; in turn Buchweitz \cite{Buc86} gave a relation between these and totally acyclic complexes of finitely generated projective $Q/(f)$-modules. Indeed, this correspondence can be described as an equivalence of triangulated categories,
\[\xymatrix{
\HMF(Q,f)\ar[r]^-{\simeq} & \Ktac(\prj{Q/(f)}),
}\]
where $\HMF(Q,f)$ is the homotopy category of matrix factorizations of $f$, and $\Ktac(\prj{Q/(f)})$ is the homotopy category of totally acyclic complexes of finitely generated projective $Q/(f)$-modules. In part, our goal is to develop the notion of matrix factorizations more generally---relative to a self-orthogonal category of modules---with an emphasis on extending this equivalence.

Let $S$ be a commutative ring, let $f\in S$, and let $\sfC$ be an additive subcategory of $\Mod{S}$, the category of $S$-modules. A linear factorization of $f$, defined by Dyckerhoff and Murfet \cite{DM13}, is a pair of $S$-modules $M_0$ and $M_1$ along with homomorphisms $d_1:M_1\to M_0$ and $d_0:M_0\to M_1$ satisfying $d_1d_0=f1^{M_0}$ and $d_0d_1=f1^{M_1}$. We define a \emph{$\sfC$-factorization of $f$} to be a linear factorization of $f$ such that $M_0,M_1\in \sfC$.  The homotopy category of $\sfC$-factorizations of $f$, denoted $\HF(\sfC,f)$, is the category whose objects are $\sfC$-factorizations of $f$ and whose morphisms are homotopy classes of the natural maps between $\sfC$-factorizations; see Section \ref{mfs}. Taking $\sfC$ to be the category of finitely generated projective modules over a regular local ring, one obtains the usual notion of matrix factorizations in \cite{Eis80}.

Set $R=S/(f)$. To relate a $\sfC$-factorization of $f$ to a suitable type of totally acyclic complex of $R$-modules, a natural setting to consider is when $\sfC$ is self-orthogonal, that is, $\Ext_S^i(M,M')=0$ for every $M,M'\in \sfC$ and $i\geq 1$. If $\sfC$ is self-orthogonal and $f\in S$ is $S$-regular and $M$-regular for every $M\in \sfC$, then the category $R\otimes_S \sfC$ is self-orthogonal---see Proposition \ref{change_of_rings}---in which case there is a natural notion of total acyclicity. Proposition \ref{acyclicW} thus relates $\sfC$-factorizations of $f$ to $R\otimes_S\sfC$-totally acyclic complexes. Here, for a self-orthogonal category $\sfW$ in $\Mod{R}$, a $\sfW$-totally acyclic complex is an acyclic complex of modules in $\sfW$ whose acyclicity is preserved by $\Hom_R(-,\sfW)$ and $\Hom_R(\sfW,-)$; this includes the usual notions of total acyclicity for complexes of projective or injective modules, and is a special case of that in \cite{CET18}.

In this setting, that is, if $\sfC$ is an additive self-orthogonal subcategory of $\Mod{S}$ and $f$ is $S$-regular and $M$-regular for every $M\in \sfC$, then we prove in Theorem \ref{fullyfaithful} that there is a full and faithful triangulated functor,
\[\xymatrix{
\sfT: \HF(\sfC,f)\ar[r] & \Ktac(R\otimes_S \sfC),
}\]
where $\Ktac(R\otimes_S\sfC)$ is the homotopy category of $R\otimes_S\sfC$-totally acyclic complexes. This embedding extends work of Bergh and Jorgensen; indeed, its proof is closely modelled on that of \cite[Theorem 3.5]{BJ16}, which is recovered by setting $\sfC=\prj{S}$. 

The functor $\sfT$ sends a $\sfC$-factorization of $f$ to a 2-periodic complex, see Proposition \ref{acyclicW}, and so we do not expect it to be an equivalence without additional assumptions on $S$ and $\sfC$. If $S$ is a regular local ring and $\sfC=\Prj{S}$ is the category of projective $S$-modules, then we show in Theorem \ref{mainthm} that there is a triangulated equivalence:
\[\xymatrix{
\HF(\Prj{S},f)\ar[r]^-{\simeq} & \Ktac(\Prj{R}).
}\]
Indeed, restricting to the subcategory of finitely generated projective modules, this is the equivalence due to Eisenbud \cite{Eis80} and Buchweitz \cite{Buc86} described above. 

Parallel to this development, we consider a dual situation in terms of divisibility. If $f$ is $S$-regular and $M$-divisible for every $M\in \sfC$, we observe in Theorem \ref{fullyfaithful_div} that there is an embedding $\HF(\sfC,f)\to \Ktac(\Hom_S(R,\sfC))$. In particular, since injective $S$-modules are divisible, we obtain an equivalence for $\sfC=\Inj{S}$, the category of injective $S$-modules, when $S$ is a regular local ring; see Theorem \ref{mainthm_inj}.

Another natural (torsion-free) self-orthogonal category to consider is $\FC{S}$, the category of flat-cotorsion $S$-modules; see Section \ref{equiv_fc}. We prove in Theorem \ref{mainthm_fc} that if $S$ is a regular local ring, then there is a triangulated equivalence:
\[\xymatrix{
\HF(\FC{S},f)\ar[r]^-{\simeq} & \Ktac(\FC{R}).
}\]
Here $\Ktac(\FC{R})$ is the homotopy category of acyclic complexes of flat-cotorsion $R$-modules such that for every flat-cotorsion $R$-module $F$, application of $\Hom_R(F,-)$ and $\Hom_R(-,F)$ preserves acyclicity.

In addition to the classic equivalence described above, Buchweitz gave in \cite{Buc86} an equivalence, assuming $S$ is a regular local ring, between the homotopy category of matrix factorizations of $f$ and the singularity category of $R$; this was proven explicitly by Orlov \cite{Orl04}. Along these lines, and as a consequence of the previous equivalence, we observe in Corollary \ref{purederivedcor} a triangulated equivalence,
\[\xymatrix{
\HF(\FC{S},f)\ar[r]^-{\simeq} & \DFtac(\Flat{R}),
}\]
where $\DFtac(\Flat{R})$ is the subcategory of the pure derived category of flat $R$-modules consisting of \textbf{F}-totally acyclic complexes. This category plays the role of the singularity category in the context of the pure derived category, in that it vanishes if and only if $R$ is regular; see \cite[Proposition 9.7]{Mur07} and \cite{MS11}.

\section{Self-orthogonal categories of modules}
\noindent
Throughout this paper, let $S$ be a commutative ring. The category of all $S$-modules is denoted $\Mod{S}$. Tacitly, we assume all subcategories of $\Mod{S}$ are full and closed under isomorphisms. We use standard homological notation throughout, and an $S$-complex means a chain complex of $S$-modules. 

Let $\Prj{S}$, $\Inj{S}$, $\Flat{S}$ denote the categories of projective, injective, and flat $S$-modules, respectively; $\prj{S}$ denotes the category of finitely generated projective $S$-modules. Let $\Cot{S}$ denote the category of cotorsion $S$-modules, that is, those $S$-modules $C$ such that $\Ext_S^1(F,C)=0$ for every flat $S$-module $F$. For brevity, write $\FC{S}=\Flat{S}\cap \Cot{S}$ for the category of flat-cotorsion $S$-modules.

\begin{dfn}
Let $\sfC$ be a subcategory of $\Mod{S}$. The category $\sfC$ is called \emph{self-orthogonal}\footnote{This differs from \cite{CET18}, where the term was used to refer to $\Ext^1$-orthogonality and is implied by the definition given here; our usage here agrees with what would be written as $\sfC\perp \sfC$ in \cite{SWSW08}.} if $\Ext_S^i(C,C')=0$ for all $C, C'\in \sfC$ and all $i\geq 1$.
\end{dfn}

\begin{exa}
Evidently both $\Prj{S}$ and $\Inj{S}$ are self-orthogonal. 

The category $\FC{S}$ is also self-orthogonal: Let $F$ and $F'$ be flat-cotorsion $S$-modules. If $P\to F$ is a projective resolution over $S$, then $\coker(d_i^P)$ is a flat $S$-module for $i\geq 1$, hence $\Ext_S^i(F,F')\cong \Ext_S^1(\coker(d_{i}^P),F')=0$ for all $i\geq 1$.
\end{exa}

\begin{dfn}
Let $M$ be an $S$-module, $f\in S$, and $\sfC$ be a subcategory of $\Mod{S}$. 

The element $f$ is \emph{$M$-regular} if $fx=0$ implies $x=0$ for each $x\in M$; $f$ is \emph{$\sfC$-regular} if $f$ is $M$-regular for every $M\in \sfC$. 

The element $f$ is \emph{$M$-divisible} if for every $x\in M$, there exists $y\in M$ with $fy=x$; $f$ is \emph{$\sfC$-divisible} if $f$ is $M$-divisible for every $M\in \sfC$.
\end{dfn}

\begin{exa}
Let $f\in S$ be an $S$-regular element. 

If $\sfC$ is a subcategory of $\Mod{S}$ contained in the category of torsion-free $S$-modules, then $f$ is $\sfC$-regular. In particular, $f$ is $\Flat{S}$-regular, $\FC{S}$-regular, and $\Prj{S}$-regular.

If $\sfC$ is a subcategory of $\Mod{S}$ contained in the category of divisible $S$-modules, then $f$ is $\sfC$-divisible. In particular, $f$ is $\Inj{S}$-divisible.
\end{exa}

Let $S\to R$ be a ring homomorphism and let $\sfC$ be a subcategory of $\Mod{S}$. The following subcategories of $\Mod{R}$ play a special role in this paper: 
\begin{align*}
R\otimes_S\sfC & =\{W\in \Mod{R} \mid W\cong R\otimes_S C\text{, for some $C\in \sfC$}\};\\
\Hom_S(R,\sfC) & =\{W\in \Mod{R} \mid W\cong \Hom_S(R,C)\text{, for some $C\in \sfC$}\}.
\end{align*}

\begin{rmk}\label{prj_inj_eq}
For any ring homomorphism $S\to R$, we have $R\otimes_S \Prj{S}\subseteq \Prj{R}$ and $\Hom_S(R,\Inj{S})\subseteq \Inj{R}$, see for example \cite[Proposition 2.3]{CJ09}; the former is an equality if the homomorphism is local, the second is an equality if the homomorphism is a surjection. The equality for projective modules uses that projective modules over a local ring are free. We justify the equality for injective modules here: Let $I$ be an injective $R$-module and let $I \to E_S(I)$ be its injective envelope as an $S$-module. Since the natural map $\Hom_S(R,I)\to I$ is an isomorphism, it follows that the induced injection $\Hom_S(R,I)\to \Hom_S(R,E_S(I))$ of $R$-modules is essential and splits, thus is an isomorphism. It follows that $I\cong \Hom_S(R,E_S(I))$.
\end{rmk}

For an $S$-module $M$, denote by $\pd_SM$, $\id_SM$, and $\fd_SM$ the projective, injective, and flat dimensions of $M$ over $S$.
\begin{rmk}\label{pdidfd}
Let $f\in S$ be an $S$-regular element, and set $R=S/(f)$.  If $P$ is a projective $R$-module, then $\pd_SP=1$ (see \cite[Part III, Theorem 3]{Kap72}); if $I$ is an injective $R$-module, then $\id_SI=1$ (see \cite[Theorem 202]{Kap70}). It thus follows that if $F$ is a flat $R$-module, then $\fd_SF=1$; this uses the fact that an $S$-module $M$ is flat if and only if its character dual $\Hom_{\ZZ}(M,\QQ/\ZZ)$ is injective. 
\end{rmk}

Part (i) of the next change of rings result is due to Rees \cite{Ree56}; part (iii) is dual. 
If $M$ is an $S$-module, $f\in S$, and $R=S/(f)$, it is often convenient to identify $R\otimes_SM\cong M/fM$ and $\Hom_S(R,M)\cong (0:_Mf)=\{x\in M \mid fx=0\}\subseteq M$. 
\begin{lem}\label{Rees}
Let $f\in S$ be an $S$-regular element and set $R=S/(f)$.

If $M$ is an $S$-module such that $f$ is $M$-regular and $N$ is an $R$-module, then
\begin{enumerate}
\item[(i)] $\Ext_S^{i+1}(N,M)\cong \Ext_R^i(N,R\otimes_S M)$ for all $i\geq 0$;
\item[(ii)] $\Ext_S^i(M,N)\cong \Ext_R^i(R\otimes_S M,N)$ for all $i\geq 0$.
\end{enumerate}

If $M$ is an $S$-module such that $f$ is $M$-divisible and $N$ is an $R$-module, then
\begin{enumerate}
\item[(iii)] $\Ext_S^{i+1}(M,N)\cong \Ext_R^i(\Hom_S(R,M),N)$ for all $i\geq 0$;
\item[(iv)] $\Ext_S^{i}(N,M)\cong \Ext_R^i(N,\Hom_S(R,M))$ for all $i\geq 0$.
\end{enumerate}
\end{lem}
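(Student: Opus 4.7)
The plan is to derive all four parts from two derived adjunctions arising from the surjection $S \twoheadrightarrow R$: for any $S$-module $M$ and $R$-module $N$ (viewed as an $S$-module by restriction of scalars), there are natural isomorphisms in $\D(R)$
\[
\RHom_R(R \Lotimes_S M, N) \simeq \RHom_S(M, N) \qquad \text{and} \qquad \RHom_R(N, \RHom_S(R, M)) \simeq \RHom_S(N, M).
\]
These are the derived tensor/Hom and Hom/Hom adjunctions for extension and coextension of scalars along $S \to R$. The strategy is to compute the two complexes $R \Lotimes_S M$ and $\RHom_S(R, M)$ under each hypothesis on $f$, substitute into the adjunctions, and read off the answer via cohomology.

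The first step uses that $f$ is $S$-regular, so $0 \to S \xrightarrow{f} S \to R \to 0$ is a length-one $S$-projective resolution of $R$. Applying $-\otimes_S M$ and $\Hom_S(-, M)$ to this resolution yields two-term complexes whose (co)homology is governed by the behaviour of multiplication by $f$ on $M$. In the $M$-regular case, this gives $R \Lotimes_S M \simeq R \otimes_S M$ concentrated in degree $0$ and $\RHom_S(R, M) \simeq (R \otimes_S M)[-1]$ concentrated in cohomological degree $1$, the latter because $\Hom_S(R, M) = (0:_M f) = 0$. Dually, in the $M$-divisible case (so $fM = M$), the same resolution gives $R \Lotimes_S M \simeq \Hom_S(R, M)[1]$ concentrated in homological degree $1$, and $\RHom_S(R, M) \simeq \Hom_S(R, M)$ concentrated in degree $0$.

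Substituting these four computations into the two adjunctions and taking $H^i$ produces (ii) and (iv) directly, while (i) and (iii) acquire the degree shift $i \mapsto i+1$ coming from the $[-1]$ on $\RHom_S(R, M)$ in the regular case and the $[1]$ on $R \Lotimes_S M$ in the divisible case. I do not anticipate a serious obstacle: once the two adjunctions and the two complex computations are in place, the argument is bookkeeping, with the only delicate point being to track the direction of the shift that produces the $i+1$ on the left of (i) and (iii). A reader preferring to avoid derived language could instead exploit the long exact $\Ext$ sequences of $0 \to M \xrightarrow{f} M \to R \otimes_S M \to 0$ and $0 \to \Hom_S(R, M) \to M \xrightarrow{f} M \to 0$, using that multiplication by $f$ acts as zero on any $\Ext$ group with an argument annihilated by $f$.
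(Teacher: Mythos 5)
Your proof is correct, and it takes a genuinely different route from the paper's. The paper handles (i) and (ii) by citing Matsumura (Lemma 2, p.~140) and Rees, gives a hands-on argument for (iii) by directly verifying that the functor $N \mapsto \Ext_S^{i+1}(M,N)$ satisfies the universal property of the $i$th right derived functor of $\Hom_R(\Hom_S(R,M),-)$ --- using the short exact sequence $0 \to \Hom_S(R,M) \to M \xrightarrow{f} M \to 0$, the vanishing of multiplication by $f$ on $\Ext$ when the second argument is killed by $f$, and effaceability via $\id_S I = 1$ for injective $R$-modules $I$ --- and proves (iv) by underived tensor--Hom adjunction applied to a projective $R$-resolution of $N$. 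Your argument instead packages all four parts into a single computation: identify $R \Lotimes_S M$ and $\RHom_S(R,M)$ via the length-one Koszul resolution of $R$ over $S$ under each hypothesis, then plug into the two derived adjunctions $\RHom_R(R\Lotimes_S M, N) \simeq \RHom_S(M,N)$ and $\RHom_R(N, \RHom_S(R,M)) \simeq \RHom_S(N,M)$ and read off cohomology. This buys uniformity and transparency about where the degree shift comes from (the complexes $R \Lotimes_S M$ or $\RHom_S(R,M)$ being concentrated in a nonzero degree), at the cost of invoking the full derived-category formalism; the paper's route for (iii) is longer but self-contained at the level of classical homological algebra. Both your shift computations and your identifications $\Hom_S(R,M) = (0:_M f)$, $R\otimes_S M = M/fM$, $fM = M$ (in the divisible case) are correct, so the argument goes through as stated.
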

\begin{proof}
(i) \& (ii): See Matsumura \cite[Lemma 2, p. 140]{Mat89} for a proof of these; (i) was originally shown by Rees \cite[Theorem 2.1]{Ree56}.

(iii): We give an argument dual to \cite[Theorem 2.1]{Ree56}, showing that the functor $E^i(-)=\Ext_S^{i+1}(M,-)$ is the $i$th right derived functor of $\Hom_R(\Hom_S(R,M),-)$. Apply $\Hom_S(-,N)$ to the short exact sequence 
\[\xymatrix{
0\ar[r] & \Hom_S(R,M)\ar[r] & M\ar[r]^{f} & M\ar[r] & 0
}\]
to obtain the following exact sequence
\[
\xymatrix@C=1em{
\Hom_S(M,N)\ar[r] & \Hom_S(\Hom_S(R,M),N)\ar[r] &  \Ext_S^1(M,N)\ar[r]^f &\Ext_S^1(M,N).
}\]
Since $fN=0$, we obtain $\Hom_S(M,N)=0$. Additionally, multiplication by $f$ on $M$ or $N$ induce the same map on $\Ext_S^1(M,N)$: also multiplication by $f$. As $fN=0$, this map must be 0, thus yielding 
$$\Ext_S^1(M,N)\cong \Hom_S(\Hom_S(R,M),N)\cong \Hom_R(\Hom_S(R,M),N).$$ 
Hence $E^0(-)\cong \Hom_R(\Hom_S(R,M),-)$. For any injective $R$-module $I$, we have $\id_SI=1$ by Remark \ref{pdidfd}, hence $E^i(I)=0$ for $i\geq 1$. Finally, for a short exact sequence $0\to N'\to N\to N''\to 0$ of $R$-modules, $\Hom_S(M,N'')=0$ and so there is a long exact sequence 
$$0\to E^0(N')\to E^0(N)\to E^0(N'')\to E^1(N')\to E^1(N)\to E^1(N'')\to \cdots,$$
and it follows that $E^i(-)$ is the $i$th right derived functor of $\Hom_R(\Hom_S(R,M),-)$ and thus is isomorphic to $\Ext_R^i(\Hom_S(R,M),-)$. 

(iv): Let $P$ be a projective resolution of $N$ over $R$; standard tensor--Hom adjunction yields $\Hom_S(R\otimes_R P,M)\cong \Hom_R(P,\Hom_S(R,M))$, and the desired isomorphism follows.
\end{proof}

\begin{prp}\label{change_of_rings}
Let $\sfC$ be a self-orthogonal subcategory of $\Mod{S}$, let $f\in S$ be $S$-regular, and set $R=S/(f)$. The following hold:
\begin{enumerate}
\item[(i)] If $f$ is $\sfC$-regular, then $R\otimes_S \sfC$ is self-orthogonal in $\Mod{R}$.
\item[(ii)] If $f$ is $\sfC$-divisible, then $\Hom_S(R,\sfC)$ is self-orthogonal in $\Mod{R}$.
\end{enumerate}
\end{prp}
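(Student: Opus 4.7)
The plan is to reduce both statements to the self-orthogonality of $\sfC$ via the change-of-rings isomorphisms in Lemma \ref{Rees}, with a short exact sequence encoding regularity (resp.\ divisibility) providing the final input.

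For part (i), fix $C, C' \in \sfC$. Since $f$ is $C$-regular, Lemma \ref{Rees}(ii) applied with $M = C$ and $N = R \otimes_S C'$ (viewed as an $R$-module) yields
\[
\Ext_R^i(R \otimes_S C,\, R \otimes_S C') \cong \Ext_S^i(C,\, R \otimes_S C')
\]
for all $i \geq 0$. So the task reduces to proving that $\Ext_S^i(C, R\otimes_S C')$ vanishes for $i\ge 1$. For this, I would use that $f$ is $C'$-regular to obtain the short exact sequence
\[
0 \to C' \xrightarrow{f} C' \to R \otimes_S C' \to 0,
\]
then apply $\Hom_S(C,-)$. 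The resulting long exact sequence has $\Ext_S^i(C, C') = 0$ for $i \ge 1$ by self-orthogonality of $\sfC$, sandwiching each $\Ext_S^i(C, R\otimes_S C')$ between two zeros for $i \geq 1$.

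For part (ii), fix $C, C' \in \sfC$. Since $f$ is $C'$-divisible, Lemma \ref{Rees}(iv) applied with $M = C'$ and $N = \Hom_S(R, C)$ gives
\[
\Ext_R^i\bigl(\Hom_S(R,C),\, \Hom_S(R,C')\bigr) \cong \Ext_S^i\bigl(\Hom_S(R,C),\, C'\bigr)
\]
for all $i \geq 0$. To show the right side vanishes for $i \ge 1$, I would use that $f$ is $C$-divisible to obtain the short exact sequence
\[
0 \to \Hom_S(R,C) \to C \xrightarrow{f} C \to 0,
\]
identifying $\Hom_S(R,C) \cong (0 :_C f)$. Applying $\Hom_S(-, C')$ and using $\Ext_S^i(C,C') = 0$ for $i \ge 1$ from self-orthogonality of $\sfC$ gives the desired vanishing by the same sandwiching argument.

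Neither part poses a serious obstacle: the proof is essentially a two-step bookkeeping exercise combining Lemma \ref{Rees} with a Koszul-style short exact sequence. The only mild subtlety is making sure the roles of the two modules $C, C'$ are kept straight: in (i) we use regularity on $C$ to invoke Lemma \ref{Rees}(ii), and regularity on $C'$ to build the short exact sequence; in (ii) we use divisibility on $C'$ for Lemma \ref{Rees}(iv) and divisibility on $C$ for the short exact sequence. Both hypotheses are supplied by the assumption that $f$ is $\sfC$-regular (resp.\ $\sfC$-divisible).
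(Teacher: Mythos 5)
Your proposal is correct and follows essentially the same route as the paper: in (i), Lemma \ref{Rees}(ii) plus the long exact sequence from applying $\Hom_S(C,-)$ to $0\to C'\xrightarrow{f}C'\to R\otimes_S C'\to 0$; in (ii), Lemma \ref{Rees}(iv) plus the long exact sequence from applying $\Hom_S(-,C')$ to $0\to\Hom_S(R,C)\to C\xrightarrow{f}C\to 0$, exactly as the paper indicates when it says (ii) is proved dually.
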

\begin{proof}
(i): For $S$-modules $C,\ C'\in \sfC$ and $i\geq 0$, Lemma \ref{Rees}(ii) yields that $\Ext_R^i(R\otimes_S C,R\otimes_S C')\cong \Ext_S^i(C,R\otimes_S C')$.
It will therefore be enough to show that $\Ext_S^i(C,R\otimes_S C')=0$ for $i\geq 1$. As $f$ is $\sfC$-regular, there is an exact sequence
\[\xymatrix{
0\ar[r] & C'\ar[r]^{f} &C'\ar[r] &  R\otimes_S C'\ar[r] & 0.
}\]
Application of the functor $\Hom_S(C,-)$ yields a long exact sequence:
\[\xymatrix{
\cdots \ar[r] & \Ext_S^i(C,C')\ar[r]& \Ext_S^i(C,R\otimes_SC')\ar[r]& \Ext_S^{i+1}(C,C')\ar[r] & \cdots
}\]
By assumption, $\Ext_S^i(C,C')=0= \Ext_S^{i+1}(C,C')$ for $i\geq 1$, and it follows that $\Ext_S^i(C,R\otimes_SC')=0$ for $i\geq 1$. 

(ii): This is proved dually to part (i), using instead Lemma \ref{Rees}(iv) and the existence of an exact sequence
\[\xymatrix{
0\ar[r] & \Hom_S(R,C)\ar[r] &C\ar[r]^f &  C\ar[r] & 0
}\]
for each $C\in \sfC$.
\end{proof}

\section{$\sfC$-factorizations and total acyclicity}\label{mfs} 
\noindent
Let $f\in S$. Extending the classic notion of matrix factorizations \cite{Eis80}, Dyckerhoff and Murfet define \cite{DM13} a \emph{linear factorization of $f$} to be a $\ZZ/2\ZZ$-graded $S$-module $M=M_0\oplus M_1$ together with an $S$-linear differential $d:M\to M$ that is homogeneous of degree $1$ and satisfies $d^2=f 1^M$. 
We often write such a linear factorization as
$$(M,d)=(\xymatrix{M_1 \ar@<1mm>[r]^{d_1} & M_0\ar@<1mm>[l]^{d_0}}),$$
where $d_1d_0=f 1^{M_0}$ and $d_0d_1=f 1^{M_1}$.

A morphism $\alpha:(M,d)\to (M',d')$ of linear factorizations of $f$ is a degree 0 map which commutes with the differentials on $M$ and $M'$; it consists of maps $\alpha_i:M_i\to M_i'$, for $i=0,1$, making the following diagram commute:
\begin{align*}
\xymatrix{
M_1 \ar[r]^{d_1}\ar[d]^{\alpha_1}& M_0\ar[r]^{d_0}\ar[d]^{\alpha_0} & M_1\ar[d]^{\alpha_1}\\
M_1'\ar[r]^{d_1'} & M_0'\ar[r]^{d_0'} & M_1'
}
\end{align*}

\begin{dfn}
Let $\sfC$ be a subcategory of $\Mod{S}$. A {\em $\sfC$-factorization of $f$} is a linear factorization $(M,d)$ of $f$ such that $M_0,M_1\in \sfC$. 

Denote by $\F(\sfC,f)$ the category whose objects are $\sfC$-factorizations of $f$ and whose morphisms are those described above. 
\end{dfn}
In particular, if $\prj{S}$ is the category of finitely generated projective $S$-modules, then a $\prj{S}$-factorization of $f$ is the same as the usual notion of a \emph{matrix factorization} of $f$, that is, $\F(\prj{S},f)=\MF(S,f)$.

We say two morphisms $\alpha,\beta:(M,d)\to (M',d')$ of linear factorizations are {\em homotopic}, and write $\alpha\sim \beta$, if there exists homomorphisms $h_0:M_0\to M_1'$ and $h_1:M_1\to M_0'$ satisfying the usual homotopy conditions:
$$\alpha_0-\beta_0=h_1d_0+d_1'h_0\quad \text{and}\quad \alpha_1-\beta_1=h_0d_1+d_0'h_1.$$
From this, we define the associated {\em homotopy category of $\sfC$-factorizations of $f$}, denoted $\HF(\sfC,f)$, to be the homotopy category whose objects are the same as $\F(\sfC,f)$ and whose morphisms are homotopy classes of morphisms of $\sfC$-factorizations.

\begin{lem}\label{injective}
Let $(M,d)\in \F(\sfC,f)$. If $f$ is $M$-regular, then $d_1$ and $d_0$ are injective. If $f$ is $M$-divisible, then $d_1$ and $d_0$ are surjective.
\end{lem}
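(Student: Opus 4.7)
The plan is to exploit the defining relations of a linear factorization directly: $d_1d_0 = f\cdot 1^{M_0}$ and $d_0d_1 = f\cdot 1^{M_1}$. Since $M = M_0 \oplus M_1$, saying that $f$ is $M$-regular is equivalent to saying $f$ is $M_0$-regular and $M_1$-regular (and similarly for divisibility), so both statements will follow by a symmetric argument applied in turn to each component.

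For the injectivity claim, I would suppose $x \in M_0$ satisfies $d_0(x) = 0$ and simply compose with $d_1$ to get $fx = d_1d_0(x) = 0$; then $M_0$-regularity of $f$ forces $x = 0$, proving $d_0$ is injective. The argument for $d_1$ is identical with the roles of $M_0$ and $M_1$ swapped, using the relation $d_0d_1 = f\cdot 1^{M_1}$.

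For the surjectivity claim, I would start with any $y \in M_1$; by $M_1$-divisibility of $f$, choose $x \in M_1$ with $fx = y$. Then $d_0(d_1(x)) = d_0d_1(x) = fx = y$, exhibiting $y$ as a value of $d_0$. Again the same argument with the other relation produces surjectivity of $d_1$.

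There is essentially no obstacle here: the statement is a formal consequence of the factorization identities, and the only thing worth stressing in the write-up is that regularity (respectively divisibility) of $f$ on the graded module $M$ means regularity (respectively divisibility) on each of $M_0$ and $M_1$, so that both halves of each statement follow by the same one-line computation.
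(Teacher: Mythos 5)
Your proof is correct and follows essentially the same one-line computation as the paper's own argument: compose the hypothesized vanishing (or preimage) with the complementary differential and invoke the factorization identity $d_i d_j = f\cdot 1$. The only cosmetic difference is which of $d_0$, $d_1$ you treat first; the remark that $M$-regularity/divisibility restricts to each graded piece is a useful clarification but not a departure from the paper's reasoning.
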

\begin{proof}
First assume $f$ is $M$-regular. The equality $d_0d_1=f1^{M_1}$ implies that for $x\in M_1$ with $d_1(x)=0$, we have $0=d_0d_1(x)=fx$. Since $f$ is $M$-regular, it follows that $x=0$, hence $d_1$ is injective. Injectivity of $d_0$ is proved similarly. 

Next assume $f$ is $M$-divisible. Let $x\in M_0$ be any element. Divisibility implies there exists $y\in M_0$ with $fy=x$. Since $d_1d_0=f1^{M_0}$, we have $d_1d_0(y)=fy=x$, hence $d_1$ is surjective. Surjectivity of $d_0$ is proved similarly.
\end{proof}

Given a category $\sfC$ of $S$-modules, the notions of (left and right) $\sfC$-totally acyclic complexes and (left and right) $\sfC$-Gorenstein modules were defined in \cite[Definition 1.1]{CET18}; in the case where $\sfC$ is self-orthogonal, these notions simplify to the following equivalent characterizations by \cite[Propositions 1.3 and 1.5]{CET18}. For an $S$-complex $T$, we set $\Cy{i}{T}=\ker(d_i^T)$ for each $i\in \ZZ$.
\begin{dfn}\label{tac_Gor}
Let $\sfC$ be a self-orthogonal category of $S$-modules. 
\begin{enumerate}
\item An $S$-complex $T$ is {\em $\sfC$-totally acyclic} if $T$ is acyclic, $T_i\in \sfC$ for $i\in \ZZ$, and for every $C\in \sfC$, the complexes $\Hom_S(T,C)$ and $\Hom_S(C,T)$ are also acyclic. 
\item An $S$-module $M$ is {\em $\sfC$-Gorenstein} if $M=\Cy{0}{T}$ for some $\sfC$-totally acyclic complex $T$. 
\end{enumerate}
The homotopy category of $\sfC$-totally acyclic complexes is denoted $\Ktac(\sfC)$.  If $\sfC$ is additive, then $\Ktac(\sfC)$ is triangulated. 
\end{dfn}

A $\Prj{S}$-Gorenstein module is called a {\em Gorenstein projective module} and an $\Inj{S}$-Gorenstein module is called a {\em Gorenstein injective module}; these are the standard notions appearing in the literature.

The next lemma is used below to relate cokernel modules of $\sfC$-factorizations to totally acyclic complexes.

\begin{lem}\label{coker}
Let $\sfC$ be a self-orthogonal subcategory of $\Mod{S}$, let $f\in S$ be $S$-regular and $\sfC$-regular, and set $R=S/(f)$. 
If $(M,d)\in \F(\sfC,f)$, then $\coker(d_1)$ and $\coker(d_0)$ are $R$-modules, and for any $C\in \sfC$ and $i\geq 1$ the following hold:
\begin{enumerate}
\item[(i)] $\Ext_R^i(R\otimes_SC,\coker(d_1))=0=\Ext_R^i(R\otimes_S C,\coker(d_0))$,
\item[(ii)] $\Ext_R^i(\coker(d_1),R\otimes_SC)=0=\Ext_R^i(\coker(d_0),R\otimes_SC)$.
\end{enumerate}
\end{lem}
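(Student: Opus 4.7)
The plan is to first observe that $\coker(d_1)$ and $\coker(d_0)$ are annihilated by $f$, hence are $R$-modules: the relation $d_1 d_0 = f 1^{M_0}$ forces $f M_0 \subseteq \im(d_1)$, so $f$ kills $M_0/\im(d_1) = \coker(d_1)$, and symmetrically for $\coker(d_0)$ using $d_0 d_1 = f 1^{M_1}$. Next, since $f$ is $\sfC$-regular and $M_0, M_1 \in \sfC$, Lemma \ref{injective} makes both $d_0$ and $d_1$ injective, so we obtain short exact sequences of $S$-modules
\[
0 \to M_1 \xrightarrow{d_1} M_0 \to \coker(d_1) \to 0 \quad \text{and} \quad 0 \to M_0 \xrightarrow{d_0} M_1 \to \coker(d_0) \to 0,
\]
which I will use as the resolution-like input to the $\Ext$ computations.

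For part (i), since $f$ is $C$-regular, Lemma \ref{Rees}(ii) (with $M = C$ and $N = \coker(d_1)$) gives $\Ext_R^i(R \otimes_S C, \coker(d_1)) \cong \Ext_S^i(C, \coker(d_1))$ for $i \geq 0$. Applying $\Hom_S(C, -)$ to the first short exact sequence yields a long exact sequence in which, for $i \geq 1$, the module $\Ext_S^i(C, \coker(d_1))$ is sandwiched between $\Ext_S^i(C, M_0)$ and $\Ext_S^{i+1}(C, M_1)$; both vanish by self-orthogonality of $\sfC$, so $\Ext_S^i(C, \coker(d_1)) = 0$ for $i \geq 1$. The same argument applied to the second short exact sequence handles $\coker(d_0)$.

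For part (ii), I instead invoke Lemma \ref{Rees}(i), again with $f$ being $C$-regular, to obtain $\Ext_R^i(\coker(d_1), R \otimes_S C) \cong \Ext_S^{i+1}(\coker(d_1), C)$ for $i \geq 0$; note the dimension shift. It then suffices to show $\Ext_S^j(\coker(d_1), C) = 0$ for $j \geq 2$. Applying $\Hom_S(-, C)$ to the short exact sequence for $\coker(d_1)$ places $\Ext_S^j(\coker(d_1), C)$ between $\Ext_S^{j-1}(M_1, C)$ and $\Ext_S^j(M_0, C)$, and both vanish when $j \geq 2$ by self-orthogonality; the case of $\coker(d_0)$ is symmetric.

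The main point to get right is which direction of Lemma \ref{Rees} to apply in each case: for (i) there is no index shift but both arguments must be compatible, whereas (ii) forces the use of Lemma \ref{Rees}(i), which provides the shift by one that turns the trivial $\Ext^1$ obstruction into the harmless $\Ext^2$ condition. Beyond that, everything reduces to a standard dimension shift in a short exact sequence, so I expect no serious obstacles.
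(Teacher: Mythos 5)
Your proof is correct and follows essentially the same route as the paper's: establish that the cokernels are $R$-modules via $fM_0 \subseteq \im(d_1)$, use Lemma \ref{injective} to get short exact sequences over $S$, and then dimension-shift through those sequences, invoking Lemma \ref{Rees}(ii) for part (i) and the degree-shifting Lemma \ref{Rees}(i) for part (ii). The only cosmetic difference is that you apply the Rees isomorphism before the $\Ext_S$ computation while the paper applies it after, which changes nothing of substance.
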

\begin{proof}
We prove the statements for $\coker(d_1)$; proofs for $\coker(d_0)$ are similar. 

Note first that $\coker(d_1)$ is an $R$-module, since $f\coker(d_1)=0$; indeed, we have $fM_0\subseteq \im(d_1)$ as $f1^{M_0}=d_1d_0$, and so $f1^{M_0}$ induces the zero map on $\coker(d_1)$.

As $f$ is $\sfC$-regular, Lemma \ref{injective} yields an exact sequence
\begin{align}\label{star}
\xymatrix{
0\ar[r] & M_1\ar[r]^{d_1} & M_0\ar[r] & \coker(d_1)\ar[r]& 0.
}
\end{align}
Let $C\in \sfC$. Application of $\Hom_S(C,-)$  to the exact sequence $(\ref{star})$ yields a long exact sequence:
\[\xymatrix{
\cdots \ar[r]& \Ext_S^{i}(C,M_0)\ar[r]& \Ext_S^{i}(C,\coker(d_1))\ar[r]& \Ext_S^{i+1}(C,M_1)\ar[r]&\cdots
}\]
As $M_0$ and $M_1$ are in $\sfC$, we obtain that $\Ext_S^{i}(C,M_0)=0= \Ext_S^{i+1}(C,M_1)$ for $i\geq 1$, and hence $\Ext_S^{i}(C,\coker(d_1))=0$ for $i\geq 1$. Since $\coker(d_1)$ is an $R$-module, Lemma \ref{Rees}(ii) now yields $\Ext_R^i(R\otimes_SC,\coker(d_1))\cong \Ext_S^i(C,\coker(d_1))=0$ for $i\geq 1$. This gives (i).

For (ii), instead apply $\Hom_S(-,C)$ to the exact sequence $(\ref{star})$ to obtain a long exact sequence for $i\geq 1$:
\[\xymatrix{
\cdots \ar[r] & \Ext_S^{i}(M_1,C)\ar[r] & \Ext_S^{i+1}(\coker(d_1),C)\ar[r] & \Ext_S^{i+1}(M_0,C)\ar[r] & \cdots
}\]
As $M_0$ and $M_1$ are in $\sfC$, we obtain that $\Ext_S^i(M_1,C)=0=\Ext_S^{i+1}(M_0,C)$ for $i\geq 1$. It follows that $\Ext_S^{i+1}(\coker(d_1),C)=0$ for $i\geq 1$. Employing Lemma \ref{Rees}(i), we obtain $\Ext_R^i(\coker(d_1),R\otimes_SC)\cong \Ext_S^{i+1}(\coker(d_1),C)=0$ for all $i\geq 1$. 
\end{proof}

If $M$ is an $S$-module, $\alpha$ is an $S$-homomorphism, and $R=S/(f)$, then we set $\overline{M}=R\otimes_SM$ and $\overline{\alpha}=R\otimes_S\alpha$; context should make this clear.

\begin{prp}\label{acyclicW}
Let $\sfC$ be a subcategory of $\Mod{S}$, let $f\in S$ be $S$-regular and $\sfC$-regular, and set $R=S/(f)$. Let $(M,d)\in \F(\sfC,f)$. The $R$-sequence 
$$T^M:=\quad \xymatrix{\cdots \ar[r]^{\overline{d_0}} & \overline{M_1}\ar[r]^{\overline{ d_1}} & \overline{M_0} \ar[r]^{\overline{d_0}} & \overline{M_1} \ar[r]^{\overline{ d_1}} & \cdots}$$
is acyclic. If $\sfC$ is self-orthogonal, then $T^M$ is $R\otimes_S \sfC$-totally acyclic. 
\end{prp}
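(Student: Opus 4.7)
The plan is to prove the two assertions separately: acyclicity of $T^M$ depends only on $\sfC$-regularity of $f$, while the $R\otimes_S\sfC$-totally acyclic property uses the self-orthogonality hypothesis through Lemma \ref{coker}. Both parts are driven by the factorization equations $d_1 d_0 = f\cdot 1^{M_0}$ and $d_0 d_1 = f\cdot 1^{M_1}$, the injectivity of $d_0$ and $d_1$ furnished by Lemma \ref{injective}, and the observation that $\coker(d_0)$ and $\coker(d_1)$ are annihilated by $f$ (since $fM_1\subseteq \im d_0$ and $fM_0\subseteq \im d_1$) and hence are $R$-modules.

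For acyclicity, the compositions $\overline{d_1}\,\overline{d_0}$ and $\overline{d_0}\,\overline{d_1}$ both reduce to multiplication by $\overline{f}=0$, so $T^M$ is at least a complex. To verify exactness at $\overline{M_1}$, I would take $x\in M_1$ with $\overline{d_1}(\overline{x})=0$, write $d_1(x)=fy$ for some $y\in M_0$, and use $f\cdot 1^{M_0}=d_1 d_0$ to rewrite $d_1(x)=d_1 d_0(y)$; injectivity of $d_1$ then forces $x=d_0(y)$, so $\overline{x}\in \im\overline{d_0}$. The symmetric argument at $\overline{M_0}$ uses $d_0 d_1 = f\cdot 1^{M_1}$ together with injectivity of $d_0$.

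Next, assume $\sfC$ is self-orthogonal and fix $C\in \sfC$. Exploiting the $2$-periodic acyclic structure of $T^M$, the cycle at an $\overline{M_0}$-position is $\ker\overline{d_0}=\im\overline{d_1}\cong \overline{M_1}/\im\overline{d_0}=M_1/d_0(M_0)=\coker(d_0)$; symmetrically, the cycle at an $\overline{M_1}$-position is isomorphic to $\coker(d_1)$. Lemma \ref{coker} now yields
\[
\Ext_R^i(R\otimes_S C,\Cy{j}{T^M})=0=\Ext_R^i(\Cy{j}{T^M},R\otimes_S C)\quad\text{for all }j\in\ZZ\text{ and }i\geq 1.
\]
A standard dimension-shifting argument --- applying $\Hom_R(R\otimes_S C,-)$ and $\Hom_R(-,R\otimes_S C)$ to the short exact sequences $0\to \Cy{j}{T^M}\to T^M_j\to \Cy{j-1}{T^M}\to 0$ and splicing the resulting long exact Ext sequences --- then shows that both $\Hom_R(R\otimes_S C,T^M)$ and $\Hom_R(T^M,R\otimes_S C)$ are acyclic. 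Hence $T^M$ is $R\otimes_S\sfC$-totally acyclic.

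The only real bookkeeping is tracking the cycle modules through the $2$-periodic structure; no serious obstacle is anticipated, as the essential Ext-vanishing has already been isolated in Lemma \ref{coker}.
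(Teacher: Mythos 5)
Your proposal is correct and takes essentially the same route as the paper: acyclicity of $T^M$ via injectivity of $d_0,d_1$, identification of the cycle modules with $\coker(d_0)$ and $\coker(d_1)$ (you argue via isomorphism theorems and the containment $fM_1\subseteq\im d_0$, where the paper uses the snake lemma---a cosmetic difference), and then Lemma \ref{coker} applied to the cycle sequences to show both Hom-complexes are acyclic. One small omission worth flagging: the definition of $R\otimes_S\sfC$-totally acyclic (Definition \ref{tac_Gor}) presupposes that $R\otimes_S\sfC$ is itself self-orthogonal, so the argument should close by invoking Proposition \ref{change_of_rings}, as the paper does.
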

\begin{proof}
First, as $ d_1d_0=f1^{M_0}$ and $d_0 d_1=f1^{M_1}$, we have $\overline{ d_1}\; \overline{d_0}=0=\overline{d_0}\; \overline{ d_1}$ and so the sequence $T^M$ is a complex of $R$-modules.

We now show $T^M$ is acyclic. Let $x\in M_1$ such that $\overline{x}\in \ker(\overline{ d_1})$. It follows that $ d_1(x)\in fM_0$, whence there exists $y\in M_0$ such that $d_1(x)=fy$. As $fy= d_1d_0(y)$, it follows that $d_1(x)= d_1d_0(y)$, hence $ d_1(x-d_0(y))=0$. Injectivity of $d_1$, see Lemma \ref{injective}, implies that $x=d_0(y)$. Hence $\overline{d_0}(\overline{y})=\overline{x}$, and so $H_{2i+1}(T^M)=0$ for every $i\in \ZZ$. A similar argument (using injectivity of $d_0$) yields $H_{2i}(T^M)=0$ for every $i\in \ZZ$, thus proving the complex $T^M$ is acyclic.

Multiplication by $f$ on the exact sequence $0\to M_1\xrightarrow{d_1} M_0\to \coker(d_1)\to 0$, along with the snake lemma, yields an exact sequence 
\[\xymatrix{
\coker(d_1)\ar[r]^f & \coker(d_1)\ar[r] & \coker(\overline{d_1}) \ar[r] & 0.
}\]
Since $\coker(d_1)$ is an $R$-module (see Lemma \ref{coker}), this implies $\coker(\overline{d_1})\cong\coker(d_1)$; similarly, $\coker(\overline{d_0})\cong\coker(d_0)$. Acyclicity of $T^M$ gives $\Cy{2i}{T^M}\cong\coker(d_0)$ and $\Cy{2i+1}{T^M}\cong\coker(d_1)$ for every $i\in \ZZ$. 

Fix $C\in \sfC$. To verify the complexes $\Hom_R(T^M,R\otimes_S C)$ and $\Hom_R(R\otimes_S C,T^M)$ are acyclic, it suffices to show that the exact sequences
\[\xymatrix{
0\ar[r] & \coker(d_0) \ar[r] & \overline{M_0} \ar[r] & \coker(d_1) \ar[r] & 0
}\]
and
\[\xymatrix{
0\ar[r] & \coker(d_1) \ar[r] & \overline{M_1} \ar[r] & \coker(d_0) \ar[r] & 0
}\]
remain exact upon application of $\Hom_R(R\otimes_S C,-)$ and $\Hom_R(-,R\otimes_S C)$. This follows from Lemma \ref{coker}. Therefore, as $R\otimes_S \sfC$ is self-orthogonal by Proposition \ref{change_of_rings}, we obtain that $T^M$ is $R\otimes_S C$-totally acyclic.
\end{proof}

We have the next dual results involving divisibility:
\begin{lem}\label{ker}
Let $\sfC$ be a self-orthogonal subcategory of $\Mod{S}$, let $f\in S$ be $S$-regular and $\sfC$-divisible, and set $R=S/(f)$. 
If $(M,d)\in \F(\sfC,f)$, then $\ker(d_1)$ and $\ker(d_0)$ are $R$-modules, and for any $C\in \sfC$ and $i\geq 1$ the following hold:
\begin{enumerate}
\item[(i)] $\Ext_R^i(\Hom_S(R,C),\ker(d_1))=0=\Ext_R^i(\Hom_S(R,C),\ker(d_0))$,
\item[(ii)] $\Ext_R^i(\ker(d_1),\Hom_S(R,C))=0=\Ext_R^i(\ker(d_0),\Hom_S(R,C))$.
\end{enumerate}
\end{lem}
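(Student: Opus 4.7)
The plan is to mimic the proof of Lemma \ref{coker} in dual form: where that proof used the $\sfC$-regularity of $f$ (producing injective $d_i$) together with parts (i) and (ii) of Lemma \ref{Rees}, the present proof will use $\sfC$-divisibility (producing surjective $d_i$) together with parts (iii) and (iv) of Lemma \ref{Rees}. I will only write out the argument for $\ker(d_1)$, since the case of $\ker(d_0)$ is symmetric.

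The first step is to verify that $\ker(d_1)$ is indeed an $R$-module: for any $x\in \ker(d_1)$, we have $fx=d_0d_1(x)=0$, so multiplication by $f$ annihilates $\ker(d_1)$. Since $f$ is $\sfC$-divisible, Lemma \ref{injective} yields that $d_1$ is surjective, so there is a short exact sequence
\[
\xymatrix{0\ar[r] & \ker(d_1)\ar[r] & M_1\ar[r]^{d_1} & M_0\ar[r] & 0,}
\]
and this will be the main technical tool; it plays the role that the exact sequence $(\ref{star})$ played in Lemma \ref{coker}.

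For (i), fix $C\in \sfC$ and apply $\Hom_S(C,-)$ to the displayed sequence; self-orthogonality of $\sfC$ forces $\Ext_S^i(C,M_0)=0$ and $\Ext_S^{i+1}(C,M_1)=0$ for all $i\geq 1$, so the long exact sequence gives $\Ext_S^{i+1}(C,\ker(d_1))=0$ for $i\geq 1$. Since $f$ is $C$-divisible and $\ker(d_1)$ is an $R$-module, Lemma \ref{Rees}(iii) then converts this vanishing to $\Ext_R^i(\Hom_S(R,C),\ker(d_1))\cong \Ext_S^{i+1}(C,\ker(d_1))=0$ for $i\geq 1$, which is exactly what (i) asserts.

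For (ii), instead apply $\Hom_S(-,C)$ to the same sequence; self-orthogonality yields $\Ext_S^i(M_1,C)=0$ and $\Ext_S^{i+1}(M_0,C)=0$ for all $i\geq 1$, and the corresponding long exact sequence forces $\Ext_S^i(\ker(d_1),C)=0$ for $i\geq 1$. Lemma \ref{Rees}(iv) applied with $N=\ker(d_1)$ and $M=C$ then gives the isomorphism $\Ext_R^i(\ker(d_1),\Hom_S(R,C))\cong \Ext_S^i(\ker(d_1),C)$, completing the argument. The only real care required is keeping track of the index shift in Lemma \ref{Rees}(iii) versus the lack thereof in Lemma \ref{Rees}(iv); otherwise the proof is entirely parallel to that of Lemma \ref{coker}, so I do not anticipate any substantive obstacle.
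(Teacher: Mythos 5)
Your proof is correct and is precisely the dual argument the paper has in mind; the paper's own proof simply says ``Dual to the proof of Lemma \ref{coker}; use instead Lemma \ref{Rees}(iii,iv),'' and you have spelled that out accurately, including the index shift in Lemma \ref{Rees}(iii).
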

\begin{proof}
Dual to the proof of Lemma \ref{coker}; use instead Lemma \ref{Rees}(iii,iv).
\end{proof}

\begin{prp}\label{acyclicW_div}
Let $\sfC$ be a subcategory of $\Mod{S}$, let $f\in S$ be $S$-regular and $\sfC$-divisible, and set $R=S/(f)$. Let $(M,d)\in \F(\sfC,f)$. The $R$-sequence 
$$\widetilde{T}^M:=\quad \xymatrix@C=1.5em{\cdots \ar[rr]^(.35){(d_0)_*} && \Hom_S(R,{M_1})\ar[rr]^{(d_1)_*} && \Hom_S(R,{M_0}) \ar[rr]^(.65){(d_0)_*} &&\cdots}$$
is acyclic. If $\sfC$ is self-orthogonal, then $\widetilde{T}^M$ is $\Hom_S(R,\sfC)$-totally acyclic.
\end{prp}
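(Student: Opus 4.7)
The plan is to dualize the proof of Proposition \ref{acyclicW}: replace the identification $R\otimes_S M\cong M/fM$ with $\Hom_S(R,M)\cong (0:_M f)$, and replace the injectivity of the $d_i$ (which came from $\sfC$-regularity) with their surjectivity (from $\sfC$-divisibility, via Lemma \ref{injective}). Throughout, I would view $(d_i)_*$ as the restriction of $d_i$ to the $f$-torsion submodule $(0:_{M_i}f)\subseteq M_i$, noting that $S$-linearity ensures $d_i$ sends $(0:_{M_i}f)$ into $(0:_{M_{i+1}}f)$.

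First, $\widetilde{T}^M$ is a complex: functoriality of $\Hom_S(R,-)$ applied to $d_0d_1=f1^{M_1}$ and $d_1d_0=f1^{M_0}$ gives $(d_0)_*(d_1)_*=0=(d_1)_*(d_0)_*$, since $f$ acts as zero on any $\Hom_S(R,-)$. For acyclicity, suppose $x\in (0:_{M_0}f)$ satisfies $d_0(x)=0$. By Lemma \ref{injective}, $\sfC$-divisibility forces $d_1$ to be surjective, so there is $z\in M_1$ with $d_1(z)=x$; the computation $fz=d_0d_1(z)=d_0(x)=0$ shows $z\in \Hom_S(R,M_1)$ and $(d_1)_*(z)=x$. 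The parallel argument with the roles of $d_0$ and $d_1$ swapped handles the other type of cycle.

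For the total acyclicity statement, I would first identify the cycle modules. Since $d_1(y)=0$ implies $fy=d_0d_1(y)=0$ (and similarly for $d_0$), the submodule $\ker(d_i)$ already sits inside $(0:f)$, giving $\ker((d_i)_*)=\ker(d_i)$; hence $\Cy{i}{\widetilde{T}^M}\cong \ker(d_0)$ or $\ker(d_1)$ depending on parity. Acyclicity of $\widetilde{T}^M$ then produces short exact sequences
\begin{equation*}
0\to \ker(d_1)\to \Hom_S(R,M_1)\to \ker(d_0)\to 0
\end{equation*}
together with its variant where the roles of $0$ and $1$ are swapped. Self-orthogonality of $\Hom_S(R,\sfC)$ follows from Proposition \ref{change_of_rings}(ii), and applying $\Hom_R(-,\Hom_S(R,C))$ and $\Hom_R(\Hom_S(R,C),-)$ to these short exact sequences preserves exactness by Lemma \ref{ker}, establishing total acyclicity.

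The only step requiring real care is the acyclicity argument: one must correctly track that dualization swaps the regularity/injectivity pairing for the divisibility/surjectivity pairing. Once that correspondence is in place, the rest is a routine transcription of the argument for Proposition \ref{acyclicW}, with Lemma \ref{ker} filling in for Lemma \ref{coker} at the final step.
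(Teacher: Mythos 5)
Your proof is correct and takes the same route the paper intends: the paper's own proof consists of the single sentence ``Dual to the proof of Proposition \ref{acyclicW}; use instead Lemma \ref{ker},'' and your writeup is precisely that dualization carried out in detail, with the identifications $\Hom_S(R,M_i)\cong(0:_{M_i}f)$, surjectivity of $d_0,d_1$ (from Lemma \ref{injective}), the kernel short exact sequences, Proposition \ref{change_of_rings}(ii), and Lemma \ref{ker} all deployed exactly as expected.
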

\begin{proof}
Dual to the proof of Proposition \ref{acyclicW}; use instead Lemma \ref{ker}.
\end{proof}

\section{A full and faithful functor}
\noindent
Let $\sfC$ be a self-orthogonal subcategory of $\Mod{S}$. We denote by $\sfK(\sfC)$ the homotopy category of $\sfC$, whose objects are complexes of modules in $\sfC$ and morphisms are homotopy classes of degree zero chain maps. Further, we consider the full subcategory $\Ktac(\sfC)$ whose objects are the $\sfC$-totally acyclic complexes in $\sfK(\sfC)$. 

\begin{prp}\label{functor}
Let $\sfC$ be an additive self-orthogonal subcategory of $\Mod{S}$, let $f\in S$ be $S$-regular and $\sfC$-regular, and set $R=S/(f)$. There is a triangulated functor 
\[\xymatrix{
\sfT:\HF(\sfC,f)\ar[r] & \Ktac(R\otimes_S\sfC)
}\]
defined, in notation from Proposition \ref{acyclicW}, as $\sfT(M,d)=T^M$ and $\sfT([\alpha])=[\overline{\alpha}]$.
\end{prp}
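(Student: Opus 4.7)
My plan is to verify well-definedness on objects and morphisms, functoriality, and compatibility with the triangulated structure. Object-level well-definedness is already supplied by Proposition~\ref{acyclicW}, which puts $T^M$ into $\Ktac(R\otimes_S\sfC)$. For morphisms, I would first observe that if $\alpha\colon(M,d)\to(M',d')$ is a morphism of $\sfC$-factorizations, the defining relations $\alpha_0 d_1=d_1'\alpha_1$ and $\alpha_1 d_0=d_0'\alpha_0$ remain valid after reduction modulo $f$, so arranging $\overline{\alpha_0},\overline{\alpha_1}$ 2-periodically produces a chain map $\overline{\alpha}\colon T^M\to T^{M'}$. Similarly, a homotopy $(h_0,h_1)$ between two such morphisms yields, after reduction modulo $f$ and 2-periodic repetition, a chain homotopy between the associated chain maps. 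Hence $\sfT([\alpha])=[\overline{\alpha}]$ is well defined, and functoriality follows at once from additivity of $R\otimes_S -$.

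Next I would equip $\HF(\sfC,f)$ with the standard triangulated structure: the suspension $\Sigma$ sends $(M_1\xrightarrow{d_1}M_0,\,M_0\xrightarrow{d_0}M_1)$ to $(M_0\xrightarrow{-d_0}M_1,\,M_1\xrightarrow{-d_1}M_0)$, and distinguished triangles are those isomorphic to a mapping-cone triangle $(M,d)\xrightarrow{\alpha}(M',d')\to\cone(\alpha)\to\Sigma(M,d)$, where $\cone(\alpha)$ carries the usual matrix differentials built from $d$, $d'$, and $\alpha$. The first verification is a natural isomorphism $T^{\Sigma(M,d)}\cong\Sigma T^M$ in $\K(R\otimes_S\sfC)$: reading the 2-periodic complex $T^M$ through a one-step shift with sign change produces exactly the pattern of modules and differentials coming from the suspended factorization, and naturality in $\alpha$ is immediate.

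The final compatibility is $T^{\cone(\alpha)}\cong\cone(\overline{\alpha})$ in $\Ktac(R\otimes_S\sfC)$. Since $\cone(\alpha)$ has modules $M_1\oplus M_0'$ and $M_0\oplus M_1'$ with the standard matrix differentials, applying $R\otimes_S -$ degreewise and using its additivity yields precisely the matrix differentials defining $\cone(\overline{\alpha})$ in the 2-periodic pattern prescribed by Proposition~\ref{acyclicW}. Consequently, $\sfT$ sends a standard mapping-cone triangle in $\HF(\sfC,f)$ to the mapping-cone triangle of $\overline{\alpha}$ in $\Ktac(R\otimes_S\sfC)$; combined with the natural isomorphism $\sfT\circ\Sigma\cong\Sigma\circ\sfT$, this shows distinguished triangles are preserved.

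The main obstacle is bookkeeping rather than conceptual: one must keep the 2-periodic indexing, the signs in the suspension, and the sign conventions in the cone differential consistent between the two categories. Once these are pinned down, every step reduces to the additivity and functoriality of $R\otimes_S -$ together with Proposition~\ref{acyclicW}.
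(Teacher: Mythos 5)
Your proof is correct and follows essentially the same approach as the paper's: reduce modulo $f$ to pass morphisms and homotopies from $\F(\sfC,f)$ to $2$-periodic chain maps and homotopies between the complexes $T^M$, then check compatibility with suspension and mapping cones. The paper is more terse on the triangulated-structure verification (it simply asserts that $\sfT$ is additive, commutes with shift, and preserves exact triangles), while you spell out the suspension and cone bookkeeping; the substance is the same.
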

\begin{proof}
Let $[\alpha],[\beta]:(M,d)\to (M',d')$ be morphisms in $\HF(\sfC,f)$.  Set $T^M$ and $T^{M'}$ as the complexes constructed in Proposition \ref{acyclicW} and associated to $M$ and $M'$, respectively. Define $\overline{\alpha},\overline{\beta}:T^M\to T^{M'}$ as the evident 2-periodic chain maps induced by $\alpha$ and $\beta$. If $[\alpha]=[\beta]$, then there is a homotopy $h$ from $\alpha$ to $\beta$; this induces a 2-periodic homotopy $\overline{h}$ from $\overline{\alpha}$ to $\overline{\beta}$, implying that $[\overline{\alpha}]=[\overline{\beta}]$ in $\Ktac(R\otimes_S\sfC)$. Notice that as $\overline{1^M}=1^{T^M}$, if $[\alpha]=[1^M]$, then $[\overline{\alpha}]=[1^{T^M}]$.

Define a functor $\sfT:\HF(\sfC,f)\to \Ktac(R\otimes_S\sfC)$ as follows: For an object $(M,d)$, set $\sfT(M,d)=T^M$, and for a morphism $[\alpha]:(M,d)\to (M',d')$, set $\sfT([\alpha])=[\overline{\alpha}]$. The above remarks justify that $\sfT$ is well-defined on both objects and morphisms, that $\sfT$ preserves identities, and that $\sfT$ preserves compositions by the following equalities:
$$\sfT([\alpha])\sfT([\beta])=[\overline{\alpha}][\overline{\beta}]=[(\overline{\alpha})(\overline{\beta})]=[\overline{\alpha\beta}]=\sfT([\alpha\beta]).$$
Moreover, the functor $\sfT$ respects the triangulated structures, that is, $\sfT$ is additive, $\sfT((M,d)[1])=T^{M[1]}=T^M[1]=\sfT((M,d))[1]$, and $\sfT$ preserves exact triangles.
\end{proof}

\begin{lem}\label{lift}
Let $\sfC$ be a self-orthogonal subcategory of $\Mod{S}$, let $f\in S$ be $\sfC$-regular, and set $R=S/(f)$.  If $M,M'\in \sfC$ and $\varphi\in \Hom_R(\overline{M},\overline{M'})$, then there exists $\psi\in \Hom_S(M,M')$ such that $\overline{\psi}=\varphi$.
\end{lem}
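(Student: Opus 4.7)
The plan is to lift $\varphi$ in two stages, using the short exact sequence coming from $\sfC$-regularity of $f$ on $M'$ and the self-orthogonality hypothesis to kill the obstruction.

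First I would package $\varphi$ as an $S$-map out of $M$. Let $\pi_M\colon M\to \overline{M}$ and $\pi_{M'}\colon M'\to\overline{M'}$ be the canonical surjections. Define $\widetilde{\varphi}\in\Hom_S(M,\overline{M'})$ by $\widetilde{\varphi}=\varphi\circ \pi_M$. The goal becomes: lift $\widetilde{\varphi}$ through $\pi_{M'}$ to obtain $\psi\in\Hom_S(M,M')$ with $\pi_{M'}\circ\psi=\widetilde\varphi$.

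Next, since $f$ is $M'$-regular, there is a short exact sequence of $S$-modules
\[
0\longrightarrow M'\xrightarrow{\;f\;} M'\xrightarrow{\;\pi_{M'}\;}\overline{M'}\longrightarrow 0.
\]
Applying $\Hom_S(M,-)$ yields an exact sequence
\[
\Hom_S(M,M')\xrightarrow{(\pi_{M'})_*}\Hom_S(M,\overline{M'})\longrightarrow \Ext_S^1(M,M').
\]
By self-orthogonality of $\sfC$, $\Ext_S^1(M,M')=0$, so $(\pi_{M'})_*$ is surjective. Therefore there exists $\psi\in\Hom_S(M,M')$ with $\pi_{M'}\circ\psi=\widetilde{\varphi}=\varphi\circ\pi_M$.

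Finally, I would verify $\overline{\psi}=\varphi$. By definition $\overline{\psi}=R\otimes_S\psi$, which is characterized by $\overline{\psi}\circ\pi_M=\pi_{M'}\circ\psi$. Combining with the equation above gives $\overline{\psi}\circ\pi_M=\varphi\circ\pi_M$, and surjectivity of $\pi_M$ forces $\overline{\psi}=\varphi$. There is no real obstacle here beyond identifying the right exact sequence; the lemma is essentially a repackaging of $\Ext_S^1(M,M')=0$ as the surjectivity of the reduction map $\Hom_S(M,M')\to\Hom_R(\overline{M},\overline{M'})$.
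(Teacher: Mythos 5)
Your proof is correct and is essentially identical to the paper's: both apply $\Hom_S(M,-)$ to the short exact sequence $0\to M'\xrightarrow{f} M'\to \overline{M'}\to 0$, invoke $\Ext_S^1(M,M')=0$ to conclude surjectivity of the induced map $\Hom_S(M,M')\to\Hom_S(M,\overline{M'})$, and lift $\varphi\pi_M$ to obtain $\psi$ with $\overline{\psi}=\varphi$.
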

\begin{proof}
Let $\varphi:\overline{M}\to \overline{M'}$ be an $R$-homomorphism. There is an exact sequence
\[\xymatrix{
0\ar[r] & M'\ar[r]^f & M'\ar[r]^{\pi'} & \overline{M'}\ar[r] & 0.
}\]
As $\Ext_S^1(M,M')=0$, we obtain an exact sequence
\[\xymatrix{
0\ar[r] & \Hom_S(M,M')\ar[r] & \Hom_S(M,M')\ar[r] & \Hom_S(M,\overline{M'})\ar[r] & 0.
}\]
Let $\pi:M\to \overline{M}$ be the canonical quotient map. The map $\varphi\pi\in \Hom_S(M,\overline{M'})$ lifts to a map $\psi\in \Hom_S(M,M')$ such that $\pi'\psi=\varphi\pi$, that is, $\overline{\psi}=\varphi$.
\end{proof}

The following arguments to show that $\sfT$ is full and faithful closely follow those given in \cite{BJ16}, put into the more general setting of totally acyclic complexes from \cite{CET18}.

\begin{prp}\label{faithful}
The functor $\sfT$ in Proposition \ref{functor} is faithful.
\end{prp}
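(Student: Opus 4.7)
The plan is to reduce faithfulness, via additivity of $\sfT$, to showing that if $\alpha:(M,d)\to(M',d')$ is a morphism in $\F(\sfC,f)$ for which $\overline{\alpha}:T^M\to T^{M'}$ is null-homotopic in $\Ktac(R\otimes_S\sfC)$, then $\alpha$ is null-homotopic in $\F(\sfC,f)$. Given a null-homotopy of $\overline{\alpha}$, the $2$-periodicity of the complexes and of the chain map $\overline{\alpha}$ allow one to extract two $R$-homomorphisms $s_0:\overline{M_0}\to\overline{M'_1}$ and $s_1:\overline{M_1}\to\overline{M'_0}$ satisfying
\[
\overline{\alpha_0}=\overline{d'_1}s_0+s_1\overline{d_0}\quad\text{and}\quad \overline{\alpha_1}=\overline{d'_0}s_1+s_0\overline{d_1}.
\]

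Next, I apply Lemma \ref{lift} (available since $\sfC$ is self-orthogonal and $f$ is $\sfC$-regular) to lift $s_0$ and $s_1$ to $S$-homomorphisms $h_0:M_0\to M'_1$ and $h_1:M_1\to M'_0$. Because $\overline{h_i}=s_i$, the two $S$-maps $\alpha_0-d'_1 h_0-h_1 d_0$ and $\alpha_1-d'_0 h_1-h_0 d_1$ vanish modulo $f$, so their images lie in $fM'_0$ and $fM'_1$ respectively. Since $f$ is regular on $M'_0$ and $M'_1$, I may divide by $f$ uniquely to obtain $S$-homomorphisms $u_0:M_0\to M'_0$ and $u_1:M_1\to M'_1$ with
\[
\alpha_0=d'_1 h_0+h_1 d_0+fu_0\quad\text{and}\quad \alpha_1=d'_0 h_1+h_0 d_1+fu_1.
\]
Thus $(h_0,h_1)$ is an ``honest homotopy modulo the $f$-corrections $u_0,u_1$,'' and the remaining task is to absorb these corrections into a genuine homotopy.

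To finish, set $\widetilde{h}_0=h_0$ and $\widetilde{h}_1=h_1+u_0 d_1$. The identity $\alpha_0=d'_1\widetilde{h}_0+\widetilde{h}_1 d_0$ is immediate using $d_1 d_0=f\cdot 1^{M_0}$. The main delicacy is verifying the second identity $\alpha_1=d'_0\widetilde{h}_1+\widetilde{h}_0 d_1$, which reduces to the claim $d'_0 u_0 d_1=fu_1$. For this I expand the chain-map identity $\alpha_1 d_0=d'_0\alpha_0$ using the formulas for $u_0,u_1$ above; the terms involving $h_0,h_1$ collapse via $d_1 d_0=f\cdot 1^{M_0}$ and $d'_0 d'_1=f\cdot 1^{M'_1}$, leaving $f(d'_0 u_0-u_1 d_0)=0$, so $M'_1$-regularity of $f$ forces $d'_0 u_0=u_1 d_0$. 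Composing on the right with $d_1$ and applying $d_0 d_1=f\cdot 1^{M_1}$ then gives $d'_0 u_0 d_1=u_1 d_0 d_1=fu_1$, as needed. Hence $(\widetilde{h}_0,\widetilde{h}_1)$ is a bona fide null-homotopy of $\alpha$ in $\F(\sfC,f)$, proving faithfulness of $\sfT$.
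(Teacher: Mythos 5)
Your argument has a genuine gap at the very first step. You assert that, given a null-homotopy of $\overline{\alpha}$ in $\Ktac(R\otimes_S\sfC)$, the $2$-periodicity of the complexes and of $\overline{\alpha}$ allows you to extract a \emph{$2$-periodic} null-homotopy, i.e., two maps $s_0,s_1$ satisfying both relations simultaneously. But a null-homotopy $\sigma=(\sigma_i)_{i\in\ZZ}$ in the ordinary homotopy category has no reason to be $2$-periodic, and the two relations you write down come from different degrees of $\sigma$: degree $1$ gives $\overline{\alpha_1}=\overline{d_0'}\sigma_1+\sigma_0\overline{d_1}$, while degree $0$ gives $\overline{\alpha_0}=\overline{d_1'}\sigma_0+\sigma_{-1}\overline{d_0}$ and degree $2$ gives $\overline{\alpha_0}=\overline{d_1'}\sigma_2+\sigma_1\overline{d_0}$. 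There is no single pair $(s_0,s_1)$ that manifestly satisfies both of your displayed identities, because $\sigma_{-1}$, $\sigma_1$ and $\sigma_0$, $\sigma_2$ need not agree. In fact, establishing that $\alpha$ admits a genuine $2$-periodic homotopy is precisely the content of the proposition, so assuming a $2$-periodic null-homotopy of $\overline{\alpha}$ at the outset is essentially circular. The paper's proof explicitly flags this: it notes that $\sigma$ need not be $2$-periodic, takes lifts $h_0,h_1,h_2$ of three consecutive maps $\sigma_0,\sigma_1,\sigma_2$, and then builds the $2$-periodic homotopy $(h_0,s_1)$ with $s_1=h_1+d_1'\beta_1$ by a correction that reconciles the mismatch between $h_0$ and $h_2$.

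The second half of your argument --- lifting modulo $f$, dividing by $f$ using regularity to get $u_0,u_1$, and then absorbing the corrections via $\widetilde h_1=h_1+u_0d_1$ and the identity $d_0'u_0=u_1d_0$ --- is correct and is a clean alternative to the paper's $\beta$-correction, \emph{provided} you already have a $2$-periodic pair $(s_0,s_1)$. To repair the proof you would need to work instead with the degree~$1$ and degree~$2$ relations (as the paper does), lift $\sigma_0,\sigma_1,\sigma_2$ separately to $h_0,h_1,h_2$, and design a correction that compensates for $h_0\neq h_2$; your present choice $\widetilde h_1=h_1+u_0d_1$ fails to verify $\alpha_0=d_1'\widetilde h_0+\widetilde h_1 d_0$ once $u_0$ is defined using $h_2$ rather than $h_0$, because the computation leaves a residual term $d_1'(h_0-h_2)$.
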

\begin{proof}
Set $\sfW=R\otimes_S \sfC$. Let $[\alpha]:M\to M'$ be a morphism in $\HF(\sfC,f)$ such that $\sfT([\alpha])$ is the zero morphism in $\Ktac(\sfW)$. Our goal is to show $[\alpha]=[0]$, that is, $\alpha$ is null homotopic in $\F(\sfC,f)$. Write $\alpha:M\to M'$ as:

\[\xymatrix{
M_1 \ar[r]^{d_1}\ar[d]^{\alpha_1} & M_0 \ar[r]^{d_0}\ar[d]^{\alpha_0} & M_1\ar[d]^{\alpha_1}\\
M_1' \ar[r]^{d_1'} & M_0' \ar[r]^{d_0'} & M_1'
}\]

Let $\overline{\alpha}:\sfT(M,d)\to \sfT(M',d')$ denote the 2-periodic chain map induced by $\alpha$. The assumption $\sfT([\alpha])=[0]$ in $\Ktac(\sfW)$ implies that $\overline{\alpha}$ is null homotopic (i.e., $\overline{\alpha}\sim 0$). Let $\sigma$ be a null homotopy for $\overline{\alpha}$; notice, however, that $\sigma$ need not be 2-periodic. We have the following diagram:

\[\xymatrix@=3em{
\cdots \ar[r] &\overline{M_1} \ar[r]^{\overline{d_1}} \ar[d]^{\overline{\alpha_1}}& \overline{M_0} \ar[r]^{\overline{d_0}} \ar[d]^{\overline{\alpha_0}}\ar[dl]^{\sigma_2}& \overline{M_1} \ar[r]^{\overline{d_1}}\ar[d]^{\overline{\alpha_1}}\ar[dl]^{\sigma_1} & \overline{M_0} \ar[r]^{\overline{d_0}}\ar[d]^{\overline{\alpha_0}}\ar[dl]^{\sigma_0} & \overline{M_1}\ar[d]^{\overline{\alpha_1}}\ar[dl]^{\sigma_{-1}}\ar[r]&\cdots\\
\cdots \ar[r] &\overline{M_1'} \ar[r]_{\overline{d_1'}} & \overline{M_0'} \ar[r]_{\overline{d_0'}} &\overline{M_1'} \ar[r]_{\overline{d_1'}} & \overline{M_0'} \ar[r]_{\overline{d_0'}} & \overline{M_1'}\ar[r]&\cdots
}\]

In particular, we have the following relations (coming from degrees 1 and 2):
\begin{align}
\overline{\alpha_1}&=\overline{d_0'}\sigma_1+\sigma_0\overline{d_1},\label{alpha1rels}\\
\overline{\alpha_0}&=\overline{d_1'}\sigma_2+\sigma_1\overline{d_0}.\label{alpha0rels}
\end{align}

Lemma \ref{lift} yields $S$-module homomorphism liftings $h_{2i}:M_0\to M_1'$ of $\sigma_{2i}$ and $h_{2i+1}:M_1\to M_0'$ of $\sigma_{2i+1}$ for $i\in \ZZ$. The exact sequence $0\to M_1'\xrightarrow{f}M_1'\xrightarrow{\pi} \overline{M_1'}\to 0$ induces an exact sequence:
\[\xymatrix{
0\ar[r] & \Hom_S(M_1,M_1')\ar[r]^f & \Hom_S(M_1,M_1')\ar[r]^{\pi_*} & \Hom_S(M_1,\overline{M_1'})\ar[r] & 0,
}\]
where $\pi_*=\Hom_S(M_1,\pi)$. Since $\alpha_1-d_0'h_1-h_0d_1\in \ker(\pi_*)$ by (\ref{alpha1rels}), one obtains a map $\beta_1\in \Hom_S(M_1,M_1')$ such that $f\beta_1=\alpha_1-d_0'h_1-h_0d_1$. Similarly, using instead (\ref{alpha0rels}), one obtains $\beta_2\in \Hom_S(M_0,M_0')$ such that $f\beta_2=\alpha_0-d_1'h_2-h_1d_0$.

Define $s_1=h_1+d_1'\beta_1$. We claim that $(h_0,s_1)$ is a null homotopy of $\alpha:M\to M'$. 
First, we have:
\begin{align*}
d_0's_1+h_0d_1&=d_0'(h_1+d_1'\beta_1)+h_0d_1\\
&=d_0'h_1+d_0'd_1'\beta_1+h_0d_1\\
&=d_0'h_1+f\beta_1+h_0d_1\\
&=d_0'h_1+\alpha_1-d_0'h_1-h_0d_1+h_0d_1\\
&=\alpha_1.
\end{align*}
Next, precomposing the equality $f\beta_1=\alpha_1-d_0'h_1-h_0d_1$ with $d_0$ gives:
\begin{align*}
f\beta_1d_0&=(\alpha_1-d_0'h_1-h_0d_1)d_0\\
&=\alpha_1d_0-d_0'h_1d_0-h_0f\\
&=d_0'\alpha_0-d_0'h_1d_0-h_0f\\
&=d_0'(\alpha_0-h_1d_0)-h_0f\\
&=d_0'(f\beta_2+d_1'h_2)-h_0f\\
&=fd_0'\beta_2+d_0'd_1'h_2-fh_0\\
&=f(d_0'\beta_2+h_2-h_0).
\end{align*}
As $f$ is $M_1'$-regular, this yields
\begin{align}\label{beta1d0}
\beta_1d_0=d_0'\beta_2+h_2-h_0.
\end{align}
We therefore obtain:
\begin{align*}
d_1'h_0+s_1d_0&=d_1'h_0+(h_1+d_1'\beta_1)d_0\\
&=d_1'h_0+h_1d_0+d_1'\beta_1d_0\\
&=d_1'h_0+h_1d_0+d_1'(d_0'\beta_2+h_2-h_0)\text{, by (\ref{beta1d0}),}\\
&=d_1'h_0+h_1d_0+f\beta_2+d_1'h_2-d_1'h_0\\
&=h_1d_0+\alpha_0-d_1'h_2-h_1d_0+d_1'h_2\\
&=\alpha_0.
\end{align*}
Hence $\alpha:M\to M'$ is homotopic to 0, that is, $[\alpha]=[0]$ in $\HF(\sfC,f)$.
\end{proof}

\begin{prp}\label{full}
The functor $\sfT$ in Proposition \ref{functor} is full.
\end{prp}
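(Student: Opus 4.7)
The plan is, given a chain map $\varphi\colon T^M\to T^{M'}$, to construct a morphism $\alpha\colon (M,d)\to(M',d')$ in $\F(\sfC,f)$ whose associated $2$-periodic chain map $\overline{\alpha}$ is homotopic to $\varphi$ in $\Ktac(R\otimes_S\sfC)$; this will show $\sfT$ is full. The first step is to apply Lemma~\ref{lift} to lift the degree-$0$ and degree-$1$ components $\varphi_0\colon\overline{M_0}\to\overline{M_0'}$ and $\varphi_1\colon\overline{M_1}\to\overline{M_1'}$ of $\varphi$ to $S$-linear maps $\alpha_0\colon M_0\to M_0'$ and $\alpha_1\colon M_1\to M_1'$. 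These need not commute with the differentials; the chain-map condition for $\varphi$ at degree $1$ gives $\overline{d_1'\alpha_1-\alpha_0 d_1}=0$, and the exact sequence appearing in the proof of Lemma~\ref{lift} (the self-orthogonality of $\sfC$ combined with the $M_0'$-regularity of $f$ shows the kernel of reduction mod $f$ on $\Hom_S(M_1,M_0')$ is exactly $f\Hom_S(M_1,M_0')$) then yields $g\in\Hom_S(M_1,M_0')$ with $d_1'\alpha_1-\alpha_0 d_1=fg$.

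Next I would set $\tilde\alpha_1:=\alpha_1-d_0'g$ and verify that $\alpha:=(\alpha_0,\tilde\alpha_1)$ is a morphism in $\F(\sfC,f)$. The identity $d_1'd_0'=f\cdot 1^{M_0'}$ yields $d_1'\tilde\alpha_1=\alpha_0 d_1$ directly. For the other square, I would premultiply the defect equation $d_1'\alpha_1-\alpha_0 d_1=fg$ by $d_0'$ and use $d_0'd_1'=f\cdot 1^{M_1'}$ to obtain $d_0'\alpha_0 d_1=f\tilde\alpha_1$; postmultiplying by $d_0$ and using $d_1 d_0=f\cdot 1^{M_0}$ then gives $f\cdot d_0'\alpha_0=f\cdot\tilde\alpha_1 d_0$, at which point the $M_1'$-regularity of $f$ cancels to produce $d_0'\alpha_0=\tilde\alpha_1 d_0$.

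It remains to show $\overline{\alpha}\simeq\varphi$ in $\Ktac(R\otimes_S\sfC)$. Set $\delta:=\varphi-\overline{\alpha}$; by construction $\delta_0=0$ and $\delta_1=\overline{d_0'}\,\overline{g}$. I would build a null homotopy $s$ for $\delta$ inductively, starting with $s_{-1}=s_0=0$. For $n\geq 1$, the chain-map condition for $\delta$ combined with the inductive hypothesis forces $\delta_n-s_{n-1}d_n^{T^M}$ to land in $\Cy{n}{T^{M'}}$, and the required lift $s_n$ through the surjection $T^{M'}_{n+1}\twoheadrightarrow\Cy{n}{T^{M'}}$ exists because $\Ext^1_R(T^M_n,\Cy{n+1}{T^{M'}})=0$ by Lemma~\ref{coker}(i): $T^M_n\in R\otimes_S\sfC$ and $\Cy{n+1}{T^{M'}}\cong\coker(d_i')$ by Proposition~\ref{acyclicW}. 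For $n\leq -2$, the dual extension problem (extending a map $\Cy{n}{T^M}\to T^{M'}_{n+1}$ to all of $T^M_n$) is obstructed by $\Ext^1_R(\Cy{n-1}{T^M},T^{M'}_{n+1})$, which vanishes by Lemma~\ref{coker}(ii), again using Proposition~\ref{acyclicW} and that $T^{M'}_{n+1}\in R\otimes_S\sfC$.

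The main delicacy is the correction step: the naive $S$-linear lifts $\alpha_0,\alpha_1$ do not assemble into a factorization morphism, and verifying the second square after replacing $\alpha_1$ by $\tilde\alpha_1$ requires the precompose--postcompose maneuver using both products $d_1'd_0'=f\cdot 1^{M_0'}$ and $d_0'd_1'=f\cdot 1^{M_1'}$ along with the $M_1'$-regularity of $f$. A secondary subtlety is that $\varphi$ need not be $2$-periodic, so reconciling the $2$-periodic $\overline{\alpha}$ with the possibly aperiodic $\varphi$ requires both halves of Lemma~\ref{coker}, one for each inductive direction of the null-homotopy construction.
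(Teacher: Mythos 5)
Your proposal is correct but takes a genuinely different route from the paper's in two places, both of which are worth noting.

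First, the correction step. The paper lifts three consecutive components $\overline\alpha_0,\overline\alpha_1,\overline\alpha_2$, obtains two defect relations $\alpha_0 d_1 - d_1'\alpha_1 = f\sigma_1$ and $\alpha_1 d_0 - d_0'\alpha_2 = f\sigma_0$, and corrects \emph{both} components: $\gamma_0=\alpha_0+d_1'\sigma_0$ and $\gamma_1=\alpha_1+d_0'\sigma_1+\sigma_0 d_1$. You lift only two components, extract a single defect equation $d_1'\alpha_1-\alpha_0 d_1=fg$, and correct only $\alpha_1$ to $\tilde\alpha_1=\alpha_1-d_0' g$, keeping $\alpha_0$ unchanged. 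Your premultiply-by-$d_0'$, postmultiply-by-$d_0$, cancel-by-regularity maneuver then yields the second square from the first. This is cleaner and in fact isolates a useful general observation: for linear factorizations of a $\sfC$-regular element $f$, commutativity of one square of a candidate morphism forces commutativity of the other, so only one defect equation ever needs to be produced.

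Second, the null-homotopy extension. The paper uses $\sigma_0,\sigma_1$ as the seed of a homotopy and cites the appendix of \cite{CT19} (together with \cite{CFH06}) to extend it to a full null homotopy. You instead set $s_{-1}=s_0=0$ (legal since $\delta_0=0$) and run an explicit two-sided induction, with the upward lifting obstruction landing in $\Ext^1_R(T^M_n,\Cy{n+1}{T^{M'}})$, killed by Lemma~\ref{coker}(i), and the downward extension obstruction landing in $\Ext^1_R(\Cy{n-1}{T^M},T^{M'}_{n+1})$, killed by Lemma~\ref{coker}(ii). This makes the argument self-contained within the paper's own lemmas rather than outsourcing to general results about $\sfW$-totally acyclic complexes, at the cost of a slightly longer write-up. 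Both routes are valid; yours is arguably the more transparent of the two.
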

\begin{proof}
Set $\sfW=R\otimes_S \sfC$. Let $(M,d)$ and $(M',d')$ be objects in $\HF(\sfC,f)$ and suppose $\overline{\alpha}:\sfT(M,d)\to \sfT(M',d')$ is a degree $0$ chain map, not necessarily 2-periodic, that represents a morphism $[\overline{\alpha}]$ in $\Ktac(\sfW)$; in particular, we have a commutative diagram:
\[\xymatrix{
\cdots \ar[r] & \overline{M_1} \ar[r]^{\overline{d_1}}\ar[d]^{\overline{\alpha}_3} & \overline{M_0}\ar[r]^{\overline{d_0}}\ar[d]^{\overline{\alpha}_2} & \overline{M_1} \ar[r]^{\overline{d_1}}\ar[d]^{\overline{\alpha}_1} & \overline{M_0}\ar[r]\ar[d]^{\overline{\alpha}_0} & \cdots\\
\cdots \ar[r] & \overline{M_1'} \ar[r]^{\overline{d_1'}} & \overline{M_0'}\ar[r]^{\overline{d_0'}} & \overline{M_1'} \ar[r]^{\overline{d_1'}} & \overline{M_0'}\ar[r] & \cdots
}\]

By Lemma \ref{lift}, for $i\in \ZZ$ we can lift $\overline{\alpha}_{2i}$ to $\alpha_{2i}:M_0\to M_0'$ and $\overline{\alpha}_{2i+1}$ to $\alpha_{2i+1}:M_1\to M_1'$.  In particular, we obtain the following diagram that commutes modulo $f$:
\[\xymatrix{
M_0 \ar[r]^{d_0}\ar[d]^{\alpha_2} & M_1 \ar[r]^{d_1}\ar[d]^{\alpha_1} & M_0\ar[d]^{\alpha_0}\\
M_0'\ar[r]^{d_0'} & M_1'\ar[r]^{d_1'} & M_0'
}\]
The exact sequence $0\to M_1'\xrightarrow{f}M_1'\xrightarrow{\pi} \overline{M_1'}\to 0$ induces an exact sequence
\[\xymatrix{
0\ar[r] & \Hom_S(M_0,M_1')\ar[r]^{f} & \Hom_S(M_0,M_1')\ar[r]^{\pi_*} & \Hom_S(M_0,\overline{M_1'})\ar[r] & 0.
}\]
Since $\alpha_1d_0-d_0'\alpha_2\in \ker(\pi_*)$, there exists a map $\sigma_0\in \Hom_S(M_0,M_1')$ such that 
\begin{align}\label{sigma0}
\alpha_1d_0-d_0'\alpha_2=f\sigma_0.
\end{align}
Similarly, there exists $\sigma_1\in \Hom_S(M_1,M_0')$ such that
\begin{align}\label{sigma1}
\alpha_0d_1-d_1'\alpha_1=f\sigma_1.
\end{align}

We now define new maps in order to construct a morphism in $\F(\sfC,f)$; define
\begin{align*}
\gamma_0&=\alpha_0+d_1'\sigma_0, \text{ and}\\
\gamma_1&=\alpha_1+d_0'\sigma_1+\sigma_0d_1.
\end{align*}
We aim to verify that the following diagram is commutative:
\begin{align}
\label{candidate}\xymatrix{
M_0 \ar[r]^{d_0}\ar[d]^{\gamma_0} & M_1 \ar[r]^{d_1}\ar[d]^{\gamma_1} & M_0\ar[d]^{\gamma_0}\\
M_0'\ar[r]^{d_0'} & M_1'\ar[r]^{d_1'} & M_0'
}
\end{align}
The equality (\ref{sigma1}), along with $d_1d_0=f1^{M_0}$ and $d_0'd_1'=f1^{M_1'}$, imply
$$fd_0'\sigma_1d_0=d_0'(\alpha_0d_1-d_1'\alpha_1)d_0=fd_0'\alpha_0-f\alpha_1d_0,$$
and so as $f$ is $M_1'$-regular, we have
\begin{align}\label{d0sigma1d0}
d_0'\sigma_1d_0=d_0'\alpha_0-\alpha_1d_0.
\end{align}
First we verify the left square of (\ref{candidate}) commutes:
\begin{align*}
\gamma_1d_0&=(\alpha_1+d_0'\sigma_1+\sigma_0d_1)d_0\\
&=\alpha_1d_0+d_0'\sigma_1d_0+f\sigma_0\\
&=\alpha_1d_0+(d_0'\alpha_0-\alpha_1d_0)+f\sigma_0,\quad\text{by (\ref{d0sigma1d0}),}\\
&=d_0'\alpha_0+f\sigma_0\\
&=d_0'\alpha_0+d_0'd_1'\sigma_0\\
&=d_0'(\alpha_0+d_1'\sigma_0)\\
&=d_0'\gamma_0.
\end{align*}
Next we verify the right square of (\ref{candidate}) commutes: 
\begin{align*}
d_1'\gamma_1&=d_1'(\alpha_1+d_0'\sigma_1+\sigma_0d_1)\\
&=d_1'\alpha_1+f\sigma_1+d_1'\sigma_0d_1\\
&=d_1'\alpha_1+\alpha_0d_1-d_1'\alpha_1+d_1'\sigma_0d_1,\quad\text{by (\ref{sigma1}),}\\
&=\alpha_0d_1+d_1'\sigma_0d_1\\
&=(\alpha_0+d_1'\sigma_0)d_1\\
&=\gamma_0d_1.
\end{align*}
Thus $\gamma=(\gamma_0,\gamma_1)$ is a morphism $(M,d)\to (M',d')$ in $\F(\sfC,f)$. 

We next claim $\overline{\alpha}\sim \overline{\gamma}$, i.e., that $\sfT([\gamma])=[\overline{\alpha}]$. We start with the following diagram (displaying homological degrees $3$ to $-1$):

\[\xymatrix@=3em{
\cdots \ar[r] & \overline{M_1}\ar[r]^{\overline{d_1}} \ar[d]^(.4){\overline{\gamma_1}-\overline{\alpha}_3} & \overline{M_0} \ar[r]^{\overline{d_0}}\ar[d]^(.4){\overline{\gamma_0}-\overline{\alpha}_2} & \overline{M_1} \ar[r]^{\overline{d_1}}\ar[d]^(.4){\overline{\gamma_1}-\overline{\alpha}_1}\ar[dl]^{\overline{\sigma_1}} & \overline{M_0}\ar[r]^{\overline{d_0}}\ar[d]^(.4){\overline{\gamma_0}-\overline{\alpha}_0}\ar[dl]^{\overline{\sigma_0}} & \overline{M_1} \ar[r] \ar[d]^(.4){\overline{\gamma_1}-\overline{\alpha}_{-1}} & \cdots \\
\cdots \ar[r] & \overline{M_1'} \ar[r]_{\overline{d_1'}} & \overline{M_0'} \ar[r]_{\overline{d_0'}} & \overline{M_1'} \ar[r]_{\overline{d_1'}} & \overline{M_0'} \ar[r]_{\overline{d_0'}} & \overline{M_1'} \ar[r] & \cdots
}\]

Evidently, $\overline{\sigma_1}$ and $\overline{\sigma_0}$ give the start of a homotopy in degree $1$:
$$\overline{\gamma_1}-\overline{\alpha}_1=\overline{\alpha}_1+\overline{d_0'}\overline{\sigma_1}+\overline{\sigma_0}\overline{d_1}-\overline{\alpha}_1=\overline{d_0'}\overline{\sigma_1}+\overline{\sigma_0}\overline{d_1}.$$
Note that the subcategory $\sfW$ is self-orthogonal by Proposition \ref{change_of_rings}. As $\sfT(M,d)$ and $\sfT(M',d')$ are $\sfW$-totally acyclic complexes, the arguments in \cite[Appendix]{CT19} show that we may extend $\overline{\sigma_1}$ and $\overline{\sigma_0}$ to a null homotopy of the displayed morphism, giving $\overline{\gamma}\sim \overline{\alpha}$. Indeed, extending the homotopy to the left is done by the proof of \cite[Proposition A.3]{CT19}, with $\sfW$-total acyclicity of $\sfT(M',d')$ standing in for the assumptions in \emph{loc.\ cit.} (and \cite[Lemma 2.4]{CFH06} in place of \cite[Lemma 2.5]{CFH06}); extending the homotopy to the right uses the dual proof for \cite[Proposition A.1]{CT19}. It follows that $\sfT([\gamma])=[\overline{\alpha}]$ hence $\sfT$ is full.
\end{proof}

The following recovers \cite[Theorem 3.5]{BJ16} when one takes $\sfC=\prj{S}$.

\begin{thm}\label{fullyfaithful}
Let $\sfC$ be an additive self-orthogonal subcategory of $\Mod{S}$, let $f\in S$ be $S$-regular and $\sfC$-regular, and set $R=S/(f)$. The triangulated functor $\sfT:\HF(\sfC,f)\to \Ktac(R\otimes_S \sfC)$ is full and faithful.
\end{thm}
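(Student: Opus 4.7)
The plan is essentially to package together the two propositions immediately above, since Proposition \ref{functor} has already given the triangulated functor $\sfT$, Proposition \ref{faithful} shows it is faithful, and Proposition \ref{full} shows it is full. So the only work is to indicate how those two arguments are structured and what makes them go through.

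For faithfulness, the strategy is: start with $[\alpha]\colon (M,d)\to (M',d')$ whose image $[\overline{\alpha}]$ is null-homotopic in $\Ktac(R\otimes_S\sfC)$, via some (not necessarily $2$-periodic) homotopy $\sigma$. Use Lemma \ref{lift} to lift each component $\sigma_i$ to an $S$-homomorphism $h_i$ between the underlying modules in $\sfC$. The homotopy relations $\overline{\alpha_1}=\overline{d_0'}\,\overline{\sigma_1}+\overline{\sigma_0}\,\overline{d_1}$ and $\overline{\alpha_0}=\overline{d_1'}\,\overline{\sigma_2}+\overline{\sigma_1}\,\overline{d_0}$ hold only modulo $f$. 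This means the discrepancies $\alpha_1-d_0'h_1-h_0 d_1$ and $\alpha_0-d_1'h_2-h_1 d_0$ lie in $f\cdot\Hom_S$; using self-orthogonality of $\sfC$ together with the short exact sequence $0\to M'\xrightarrow{f} M'\to\overline{M'}\to 0$, these discrepancies are divisible by $f$, yielding maps $\beta_1,\beta_2$. Then the corrected pair $(h_0,\,h_1+d_1'\beta_1)$ is a genuine null-homotopy of $\alpha$ in $\F(\sfC,f)$; verifying this is a direct calculation that uses $d_1d_0=f\cdot 1$, $d_0'd_1'=f\cdot 1$, and the fact that $f$ is $M'$-regular to cancel an $f$ from both sides of the identity $\beta_1 d_0=d_0'\beta_2+h_2-h_0$.

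For fullness, given a chain map $\overline{\alpha}\colon T^M\to T^{M'}$ in $\Ktac(R\otimes_S\sfC)$, lift each of its components back to $S$-linear maps $\alpha_i$ via Lemma \ref{lift}. These commute with the differentials only modulo $f$, so there exist $\sigma_0,\sigma_1$ with $\alpha_1d_0-d_0'\alpha_2=f\sigma_0$ and $\alpha_0d_1-d_1'\alpha_1=f\sigma_1$. Define $\gamma_0=\alpha_0+d_1'\sigma_0$ and $\gamma_1=\alpha_1+d_0'\sigma_1+\sigma_0 d_1$; an explicit computation using $M'$-regularity of $f$ to cancel an overall $f$ gives $d_0'\sigma_1 d_0=d_0'\alpha_0-\alpha_1 d_0$, from which $\gamma_1 d_0=d_0'\gamma_0$ and $d_1'\gamma_1=\gamma_0 d_1$ follow. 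Thus $\gamma=(\gamma_0,\gamma_1)$ is a morphism in $\F(\sfC,f)$.

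The remaining and genuinely subtle point is showing $\sfT([\gamma])=[\overline{\alpha}]$. By construction $\overline{\sigma_1}$ and $\overline{\sigma_0}$ serve as the degrees-$1$ and $2$ entries of a prospective null-homotopy between $\overline{\gamma}$ and $\overline{\alpha}$, but to conclude $\overline{\gamma}\sim\overline{\alpha}$ in $\Ktac(R\otimes_S\sfC)$ one must extend this partial homotopy to all degrees. This is the step I expect to be the main obstacle, and it is exactly where $\sfW$-total acyclicity of both $\sfT(M,d)$ and $\sfT(M',d')$ is essential: extension to the left is done by the standard Gorenstein lifting argument (as in \cite[Proposition A.3]{CT19}) using Ext-vanishing against $\sfW$, and extension to the right uses its dual. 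With $\sfW=R\otimes_S\sfC$ self-orthogonal by Proposition \ref{change_of_rings}, the hypotheses of the lifting statements are met, so the homotopy extends and $\overline{\gamma}\sim\overline{\alpha}$, proving fullness. Combining the two propositions yields the theorem.
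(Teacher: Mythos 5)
Your proposal is correct and takes exactly the paper's approach: the theorem is proved by combining Propositions \ref{functor}, \ref{faithful}, and \ref{full}, and your sketches of the faithfulness and fullness arguments match the paper's proofs of those propositions (including the key correction maps $\beta_1,\beta_2$ and $\gamma_0,\gamma_1$, and the appeal to the lifting results of \cite{CT19} to extend the partial homotopy). One small inaccuracy: to obtain $\beta_1,\beta_2$ you only need the left-exactness of $\Hom_S(M_1,-)$ applied to $0\to M_1'\xrightarrow{f}M_1'\to\overline{M_1'}\to 0$; self-orthogonality is what makes Lemma \ref{lift} work, not this divisibility step.
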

\begin{proof}
Combine Propositions \ref{functor}, \ref{faithful}, and \ref{full}.
\end{proof}

In fact, the results of this section have dual statements involving divisibility. In summary, one can show the following:
\begin{thm}\label{fullyfaithful_div}
Let $\sfC$ be an additive self-orthogonal subcategory of $\Mod{S}$, let $f\in S$ be $S$-regular and $\sfC$-divisible, and set $R=S/(f)$. There is a triangulated functor $\widetilde{\sfT}:\HF(\sfC,f)\to \Ktac(\Hom_S(R,\sfC))$ that is full and faithful.
\end{thm}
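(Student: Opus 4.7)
The plan is to dualize the constructions of Section~\ref{mfs} throughout, systematically replacing $R\otimes_S-$ and the short exact sequence $0\to M\xrightarrow{f} M\to R\otimes_S M\to 0$ arising from $\sfC$-regularity with $\Hom_S(R,-)$ and the short exact sequence $0\to \Hom_S(R,M)\to M\xrightarrow{f} M\to 0$ arising from $\sfC$-divisibility. The self-orthogonality of $\Hom_S(R,\sfC)$ established in Proposition~\ref{change_of_rings}(ii) ensures that $\Ktac(\Hom_S(R,\sfC))$ is a triangulated category, and Proposition~\ref{acyclicW_div} provides the object-level assignment $(M,d)\mapsto \widetilde{T}^M$.

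First I would extend $\widetilde{\sfT}$ to morphisms by setting $\widetilde{\sfT}([\alpha]) = [\Hom_S(R,\alpha)]$, the class of the $2$-periodic chain map obtained by applying $\Hom_S(R,-)$ termwise; a homotopy $(h_0,h_1)$ between morphisms of factorizations induces the $2$-periodic chain homotopy $\Hom_S(R,h)$, so the assignment descends to homotopy categories. Verifying that $\widetilde{\sfT}$ is a triangulated functor then proceeds verbatim as in Proposition~\ref{functor}.

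The central new ingredient for fullness and faithfulness is a dual of Lemma~\ref{lift}: for $M, M'\in \sfC$ and $\varphi\in\Hom_R(\Hom_S(R,M),\Hom_S(R,M'))$ there exists $\psi\in\Hom_S(M,M')$ with $\Hom_S(R,\psi) = \varphi$. To prove this I would apply $\Hom_S(-,M')$ to the short exact sequence $0\to \Hom_S(R,M)\to M\xrightarrow{f}M\to 0$ and use $\Ext_S^1(M,M') = 0$ to deduce surjectivity of the restriction map $\Hom_S(M,M')\twoheadrightarrow\Hom_S(\Hom_S(R,M),M')$, then identify the target with $\Hom_R(\Hom_S(R,M),\Hom_S(R,M'))$ via Lemma~\ref{Rees}(iv). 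Faithfulness then follows by mirroring the argument of Proposition~\ref{faithful}: lift a null homotopy $\sigma$ of $\Hom_S(R,\alpha)$ to $S$-module maps $h_0, h_1, h_2$, observe that the differences $\alpha_1 - d_0'h_1 - h_0d_1$ and $\alpha_0 - d_1'h_2 - h_1d_0$ vanish on the $f$-torsion submodules of $M_1$ and $M_0$ respectively, and use divisibility of $f$ on $M_0'$ and $M_1'$ to factor each through multiplication by $f$, producing maps $\beta_1: M_1\to M_1'$ and $\beta_2: M_0\to M_0'$ satisfying the same defining identities as in the regular case. Fullness is handled in parallel with Proposition~\ref{full}: lift the components of a given chain map via the dual lifting lemma, produce the candidate morphism of factorizations $(\gamma_0,\gamma_1)$ by the same formulas, and invoke the extension-of-partial-homotopy arguments from \cite[Appendix]{CT19}, which apply because $\Hom_S(R,\sfC)$ is self-orthogonal and $\widetilde{T}^M, \widetilde{T}^{M'}$ are totally acyclic therein.

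The main obstacle is that the regular proof uses $M_1'$-regularity to cancel $f$ and deduce $\beta_1 d_0 - d_0'\beta_2 - h_2 + h_0 = 0$, whereas in the divisibility setting only $f(\beta_1 d_0 - d_0'\beta_2 - h_2 + h_0) = 0$ holds. Consequently $\zeta := h_0 - h_2 + \beta_1 d_0 - d_0'\beta_2$ is a map $M_0\to \Hom_S(R,M_1')$, and the naive candidate $(h_0, h_1+d_1'\beta_1)$ satisfies only the first homotopy identity while leaving a defect $d_1'\zeta$ in the second. I expect to correct this to $(h_0, h_1 + d_1'\beta_1 + \tau)$, where $\tau: M_1\to \ker(d_0')$ is obtained by dividing $-d_1'\zeta$ through the surjection $d_0: M_0\twoheadrightarrow M_1$ (surjective by Lemma~\ref{injective} applied in the divisibility setting). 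Establishing well-definedness of $\tau$ is the technical heart of the dualization: it should use acyclicity of $\widetilde{T}^M$ to identify $\ker(d_0)$ with $d_1(\Hom_S(R,M_1))$ together with an additional relation from $\sigma$ (one degree further than those invoked in the regular case), and a parallel refinement will be required in the fullness step.
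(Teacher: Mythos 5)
Your plan follows the route the paper itself indicates (the published proof is essentially the instruction ``dualize Propositions \ref{functor}, \ref{faithful}, \ref{full} using a dual Lemma \ref{lift}''), but you have correctly located the one place where the dualization is \emph{not} mechanical: the cancellation of $f$ in Proposition \ref{faithful} (and in the display following \eqref{sigma1} in Proposition \ref{full}) uses $M_1'$-regularity of $f$, which is unavailable here. The defect $d_1'\zeta$ with $\zeta=h_0-h_2+\beta_1d_0-d_0'\beta_2$ valued in $\Hom_S(R,M_1')$ is exactly right, and the repair by a map $\tau\colon M_1\to\ker(d_0')$ with $\tau d_0=-d_1'\zeta$ does work: once $\tau d_0=-d_1'\zeta$ holds, $d_0'\tau d_0=-d_0'd_1'\zeta=-f\zeta=0$ forces $d_0'\tau=0$ because $d_0$ is surjective, so adding $\tau$ to $s_1$ fixes the second identity without disturbing the first.

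Where your description can be tightened is the ``technical heart,'' the well-definedness of $\tau$. You do not need any extra relation from $\sigma$ beyond those already in play. Acyclicity of $\widetilde T^M$ gives $\ker(d_0)=d_1\bigl(\Hom_S(R,M_1)\bigr)$, and for $y\in\Hom_S(R,M_1)$ one computes $d_1'\zeta d_1(y)=d_1'(h_0-h_2)d_1(y)=d_1'(\sigma_0-\sigma_2)\bigl(d_1(y)\bigr)$ (the $f\beta_1$ and $d_0'\beta_2 d_1$ contributions die on $f$-torsion). The degree-$2$ homotopy relation applied to $z=d_1(y)\in\ker(d_0)$ gives $d_1'\sigma_2(z)=\alpha_0(z)$; the degree-$1$ relation post-composed with $d_1'$, together with $d_1'\alpha_1=\alpha_0 d_1$ and $f\sigma_1=0$ on torsion, gives $d_1'\sigma_0(z)=\alpha_0(z)$. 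Hence $d_1'\zeta$ vanishes on $\ker(d_0)$, and $\tau$ is well defined. The parallel correction in the fullness step is even lighter than you anticipate: with $\gamma_0,\gamma_1$ defined by the same formulas, the right square of \eqref{candidate} commutes \emph{exactly} (only $f\sigma_1=\alpha_0d_1-d_1'\alpha_1$ and $d_1'd_0'=f$ are used), while the left square has defect $\xi=\alpha_1d_0-d_0'\alpha_0+d_0'\sigma_1d_0$ valued in $\ker(d_1')\cap\Hom_S(R,M_1')$; on $\ker(d_0)$ one has $\xi(z)=-d_0'\alpha_0(z)$, which vanishes because $\alpha_0$ restricts to the chain map $\overline\alpha_0$ on $f$-torsion and the latter intertwines the differentials. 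So a single correction $\gamma_1\mapsto\gamma_1-\tau'$ with $\tau'd_0=\xi$ suffices. With these details supplied, your proposal is a correct and more forthcoming version of the paper's argument.
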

\begin{proof}
One first notices that a version of Proposition \ref{functor} holds, by defining a functor $\widetilde{\sfT}$ using Proposition \ref{acyclicW_div}. Then using a dual version of Lemma \ref{lift}, one can establish analogues of Propositions \ref{faithful} and \ref{full}.
\end{proof}

\section{Equivalences for projective and injective factorizations}\label{equiv_prj}
\noindent
In this section, we consider $\Prj{S}$- and $\Inj{S}$-factorizations, referred to as projective and injective factorizations, respectively. Our goal here is to show that if $S$ is a regular local ring, $f\in S$ is nonzero, and $R=S/(f)$, then projective factorizations of $f$ correspond to Gorenstein projective $R$-modules; this can be considered as an extension of the classic bijection \cite[Corollary 6.3]{Eis80} between matrix factorizations (having no trivial direct summand) and maximal Cohen-Macaulay $R$-modules (having no free direct summand). Dually, we observe a correspondence between injective factorizations of $f$ and Gorenstein injective $R$-modules.

If one considers $\prj{S}$ in place of $\Prj{S}$ in the next result, then the classic proof, as in \cite{Eis80}, uses the Auslander--Buchsbaum formula. However, we use an approach here that does not require the modules to be finitely generated. 

\begin{prp}\label{gptomf}
Assume $S$ is a regular local ring, let $f\in S$ be nonzero, and set $R=S/(f)$. 
If $M$ is a Gorenstein projective $R$-module, then there exists a projective factorization $(P,d)\in \F(\Prj{S},f)$ with $\coker(d_1)=M$.
\end{prp}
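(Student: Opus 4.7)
The plan is to construct a length-one projective $S$-resolution $0 \to P_1 \xrightarrow{d_1} P_0 \to M \to 0$ of $M$ and then extract the second differential $d_0$ of the factorization from the relation $fM=0$. Given such a resolution, the remaining step is routine: since $fM=0$, the map $f1^{P_0}$ factors through $d_1$ by projectivity of $P_0$, yielding $d_0: P_0 \to P_1$ with $d_1 d_0 = f1^{P_0}$; injectivity of $d_1$ (immediate from the resolution) together with the identity $d_1(d_0 d_1 - f1^{P_1}) = 0$ then forces $d_0 d_1 = f1^{P_1}$. This produces $(P,d)\in\F(\Prj{S},f)$ with $\coker(d_1) = M$ as required.

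The main obstacle is therefore the existence of the resolution, i.e., showing $\pd_S M \leq 1$. To set this up concretely, I would use the complete $\Prj{R}$-resolution $T \in \Ktac(\Prj{R})$ with $M = \Cy{0}{T}$ to fit $M$ into the short exact sequence $0 \to M \to T_0 \to M' \to 0$ in $\Mod{R}$, with $T_0 \in \Prj{R}$ and $M' = \Cy{-1}{T}$ again Gorenstein projective over $R$. By Remark \ref{prj_inj_eq}, lift $T_0$ to $P_0' \in \Prj{S}$ with $P_0'/fP_0' = T_0$, and define $P_0 := \pi^{-1}(M) \subseteq P_0'$ (where $\pi: P_0' \to T_0$ is the canonical surjection) together with $P_1 := fP_0' \cong P_0'$. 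Then $0 \to P_1 \to P_0 \to M \to 0$ is exact with $P_1 \in \Prj{S}$ and cokernel $M$ by construction, so the problem reduces to showing $P_0 \in \Prj{S}$, equivalently $\pd_S M' \leq 1$.

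To establish this, I would combine the Gorenstein projective hypothesis with regularity of $S$. Lemma \ref{Rees}(i) applied to projective $S$-modules $Q$ yields $\Ext_S^{i+1}(M,Q) \cong \Ext_R^i(M, Q/fQ) = 0$ for $i \geq 1$, so Ext-vanishing against projective $S$-modules in degree $\geq 2$ is automatic. The hard part will be upgrading this to $\Ext_S^i(M, N) = 0$ for all $S$-modules $N$ and $i \geq 2$. For finitely generated $M$, Auslander--Buchsbaum immediately gives $\pd_S M = 1$ because $M$ is then maximal Cohen--Macaulay over $R$; this recovers the classical argument. For the general case I expect to need either a filtration of $M$ by finitely generated Gorenstein projective submodules together with a compatibility argument for projective dimension, or a more delicate construction exploiting the full complete resolution $T$ (for example, lifting enough terms of $T$ to $S$ simultaneously to realize $P_0$ as the cokernel of a map of projective $S$-modules coming from higher terms of $T$). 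Handling the non-finitely-generated case---either via reduction to the finitely generated situation or by bypassing it entirely---is where I anticipate the primary technical work.
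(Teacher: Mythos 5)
Your construction of $d_0$ from a short projective $S$-resolution of $M$ is exactly the paper's construction, and the explicit recipe for a candidate resolution (pulling back $M=\Cy{0}{T}$ along a projective lift $P_0'\to T_0$) is sound and reduces the whole problem, as you say, to showing $\pd_S M\leq 1$. The gap is in that reduction: you correctly derive from Lemma \ref{Rees}(i) that $\Ext_S^i(M,Q)=0$ for all projective $S$-modules $Q$ and all $i\geq 2$, but you then assume you must upgrade this to vanishing against \emph{all} $S$-modules and, finding no route, retreat to Auslander--Buchsbaum in the finitely generated case and speculate about filtrations for the general case. This misses that the hypothesis that $S$ is \emph{regular local} already hands you finite global dimension, so $\pd_S M<\infty$ for \emph{every} $S$-module, finitely generated or not. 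Once $\pd_S M=m<\infty$ is known, vanishing against projectives in degrees $\geq 2$ already forces $m\leq 1$: if $m\geq 2$, the $(m-1)$st syzygy sits in a nonsplit extension $0\to P_m\to P_{m-1}\to \Omega^{m-1}M\to 0$ with $P_m$ projective, so $\Ext_S^m(M,P_m)\cong\Ext_S^1(\Omega^{m-1}M,P_m)\neq 0$, contradicting your computation. Thus the finitely-generated reduction and filtration machinery you anticipate are unnecessary; the missing ingredient is simply invoking Serre's theorem on regular local rings.

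For comparison, the paper dispatches $\pd_S M=1$ differently: it applies the Bennis--Mahdou change-of-rings theorem for Gorenstein projective dimension to get $\GPdim_S M=\GPdim_R M+1=1$, then uses regularity of $S$ to conclude $\pd_S M<\infty$ and hence $\pd_S M=\GPdim_S M$ by \cite[4.4.7]{Chr00}. Both routes pass through finite global dimension of $S$; the paper's is shorter because the change-of-rings input is stated at the level of $\GPdim$, while your route---once repaired---is more elementary in that it avoids Gorenstein dimensions entirely and works directly with ordinary $\Ext$.
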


\begin{proof}
Let $M$ be a Gorenstein projective $R$-module. As $fM=0$, a result of Bennis and Mahdou \cite[Theorem 4.1]{BM10} yields $\GPdim_SM=\GPdim_RM+1=1$, where $\GPdim$ denotes Gorenstein projective dimension. As $S$ is regular, $M$ has finite projective dimension over $S$, thus by \cite[4.4.7]{Chr00} we have $\pd_SM=\GPdim_SM=1$.

Now a standard construction yields a projective factorization of $f$ which corresponds to $M$: First choose a projective resolution $P$ of $M$ over $S$ having the form $0\to P_1\xrightarrow{d_1}P_0\to M\to 0$. Application of $\Hom_S(P_0,-)$ to this sequence gives an exact sequence:
\[\xymatrix{
0\ar[r] & \Hom_S(P_0,P_1)\ar[r] & \Hom_S(P_0,P_0)\ar[r] & \Hom_S(P_0,M)\ar[r] & 0.
}\]
As $fM=0$, the map $f1^{P_0}$ is sent to $0$, hence this sequence shows there exists a map $d_0:P_0\to P_1$ such that $d_1d_0=f1^{P_0}$. Further, $d_1(d_0d_1)=f1^{P_0}d_1=d_1(f1^{P_1})$, and since $d_1$ is injective this implies that $d_0d_1=f1^{P_1}$. It follows that $(P,d)$ is a projective factorization of $f$ such that $\coker (d_1)=M$.
\end{proof}

\begin{thm}\label{mainthm}
Assume $S$ is a regular local ring, let $f\in S$ be nonzero, and set $R=S/(f)$. 
There is a triangulated equivalence
\[
\xymatrix{
\sfT: \HF(\Prj{S},f)\ar[r]^-{\simeq} & \Ktac(\Prj{R})
}\]
given by the functor from Proposition \ref{functor}.
\end{thm}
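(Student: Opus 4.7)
My plan is to combine the embedding from Theorem \ref{fullyfaithful} with a density argument based on Proposition \ref{gptomf}. By Theorem \ref{fullyfaithful}, the functor $\sfT\colon \HF(\Prj{S}, f) \to \Ktac(R\otimes_S \Prj{S})$ is triangulated, full, and faithful. Since $S$ is local, Remark \ref{prj_inj_eq} yields $R\otimes_S \Prj{S} = \Prj{R}$, so the codomain is exactly $\Ktac(\Prj{R})$, and it suffices to prove that $\sfT$ is essentially surjective.

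Given $T \in \Ktac(\Prj{R})$, I would set $M = \Cy{0}{T}$. By Definition \ref{tac_Gor}(ii), $M$ is a Gorenstein projective $R$-module, so Proposition \ref{gptomf} supplies a projective factorization $(P,d) \in \F(\Prj{S}, f)$ with $\coker(d_1) \cong M$. Proposition \ref{acyclicW} shows that $T^P := \sfT(P,d)$ is a $\Prj{R}$-totally acyclic (in fact $2$-periodic) complex, and its proof records $\Cy{1}{T^P} \cong \coker(d_1) \cong M$. Since $\sfT$ commutes with shifts (as noted in the proof of Proposition \ref{functor}), we have $\sfT((P,d)[1]) = T^P[1]$, and this complex shares the $0$-cycle $M$ with $T$.

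The remaining task is to promote this coincidence of $0$-cycles into a homotopy equivalence $T \simeq T^P[1]$ in $\Ktac(\Prj{R})$. This rests on the standard uniqueness principle for totally acyclic complexes: the $\Prj{R}$-total acyclicity of $T^P[1]$ forces $\Hom_R(\Prj{R}, T^P[1])$ to be acyclic, which allows the isomorphism $\Cy{0}{T} \xrightarrow{\cong} \Cy{0}{T^P[1]}$ to be lifted inductively (degree by degree) to a chain map $\varphi\colon T \to T^P[1]$; symmetrically, acyclicity of $\Hom_R(T, \Prj{R})$ yields an inverse up to homotopy. The resulting $\varphi$ is a homotopy equivalence because its mapping cone is an acyclic complex of projectives whose $0$-cycle vanishes, hence is contractible.

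The main obstacle is this last uniqueness step. Although classical for complete projective resolutions of Gorenstein projective modules, one should either verify it directly from the $\Hom$-vanishing built into Definition \ref{tac_Gor}, or invoke the well-known triangulated equivalence $\sGPrj(R) \simeq \Ktac(\Prj{R})$ given by sending a Gorenstein projective module to a complete projective resolution. Under the latter viewpoint, density of $\sfT$ reduces to the essential surjectivity of the assignment $(P,d) \mapsto \coker(d_1)$ onto $\sGPrj(R)$, which is exactly the content of Proposition \ref{gptomf}.
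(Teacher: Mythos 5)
Your proposal is correct and follows essentially the same strategy as the paper's proof: fullness and faithfulness come from Theorem~\ref{fullyfaithful}, the identification $R\otimes_S\Prj{S}=\Prj{R}$ from Remark~\ref{prj_inj_eq}, and essential surjectivity is reduced to Proposition~\ref{gptomf} by taking $\Cy{0}{T}$ and realizing it as a cokernel of a projective factorization, after which the paper invokes the comparison results \cite[Lemma~3.1 and Proposition~3.3(b)]{CET18} to upgrade the cycle isomorphism to a homotopy equivalence. One small caution about your ``verify directly'' route: the assertion that the mapping cone of $\varphi$ is contractible because it is an acyclic complex of projectives ``whose $0$-cycle vanishes'' is not quite right as stated --- an acyclic complex of projectives need not be contractible, and the cycles of the cone are not generally zero. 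The correct mechanism is the uniqueness-up-to-homotopy of liftings between totally acyclic complexes (which is precisely what \cite[Proposition~3.3(b)]{CET18} encodes, and is also what underlies your alternative route through $\sGPrj(R)\simeq\Ktac(\Prj{R})$); either of those makes the step rigorous.
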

\begin{proof}
The triangulated functor $\sfT$ given in Proposition \ref{functor}, applied to $\sfC=\Prj{S}$, is full and faithful by Theorem \ref{fullyfaithful}. Also note that $R\otimes_S \Prj{S}=\Prj{R}$ (see Remark \ref{prj_inj_eq}) and so the functor $\sfT$ has the claimed codomain.  It remains to show that $\sfT$ is essentially surjective. Let $T\in \Ktac(\Prj{R})$. Then $\Cy{0}{T}$ is a Gorenstein projective $R$-module. By Proposition \ref{gptomf} there is a $\Prj{S}$-factorization $(P,d)$ such that $\coker(d_1)=\Cy{0}{T}$. 

We argue that $\sfT(P,d)$ is homotopic to $T$.  Notice that $\Cy{0}{\sfT(P,d)}=\Cy{0}{T}$ by construction. There exists a degree 0 chain map $\phi:\sfT(P,d)\to T$ that lifts the identity map $\Cy{0}{\sfT(P,d)}\xrightarrow{=} \Cy{0}{T}$ by \cite[Lemma 3.1]{CET18}; the lifting $\phi$ is a homotopy equivalence by \cite[Proposition 3.3(b)]{CET18}.
\end{proof}

\begin{cor}\label{GPcor}
Assume $S$ is a regular local ring, let $f\in S$ be nonzero, and set $R=S/(f)$. 
There is a triangulated equivalence between $\HF(\Prj{S},f)$ and the stable category of Gorenstein projective $R$-modules.
\end{cor}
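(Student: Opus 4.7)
The plan is to derive the corollary by composing the equivalence of Theorem \ref{mainthm} with the standard equivalence
\[
\Cy{0}{-}:\Ktac(\Prj{R})\xrightarrow{\simeq}\sGPrj(R),
\]
where $\sGPrj(R)$ denotes the stable category of Gorenstein projective $R$-modules, i.e., the quotient of $\GPrj(R)$ by the ideal of morphisms factoring through a projective module. Thus the main task is to justify this second equivalence; once this is in place, composition with $\sfT$ from Theorem \ref{mainthm} yields the result.

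First I would verify that $\Cy{0}{-}$ is well-defined on objects, which is immediate from Definition \ref{tac_Gor}: the $0$-cycle of any $\Prj{R}$-totally acyclic complex is by definition a Gorenstein projective $R$-module. Next I would describe the triangulated structure transported to $\sGPrj(R)$: the shift sends $M=\Cy{0}{T}$ to $\Cy{-1}{T}$, which is well-defined up to projective summands (hence well-defined in $\sGPrj(R)$), and triangles are realized through mapping cones of lifts between totally acyclic complexes. Essential surjectivity is then just the definition of Gorenstein projective modules.

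Fullness and faithfulness reduce to two ingredients already invoked at the close of the proof of Theorem \ref{mainthm}. Any $R$-homomorphism $\Cy{0}{T}\to \Cy{0}{T'}$ extends to a chain map $T\to T'$ by \cite[Lemma 3.1]{CET18}, giving fullness. For faithfulness, if a chain map $\phi:T\to T'$ induces a map on $\Cy{0}{-}$ that factors through a projective $R$-module, then $\phi$ is null-homotopic; this is essentially the content of \cite[Proposition 3.3(b)]{CET18} together with the standard fact that homotopies of complexes of projectives are controlled by their action on cycles modulo projective factorizations.

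The only real obstacle is ensuring that these classical arguments function in the possibly large (non-finitely generated) setting of $\Prj{R}$ rather than $\prj{R}$; but since the paper has already imported \cite[Lemma 3.1]{CET18} and \cite[Proposition 3.3]{CET18} to establish Theorem \ref{mainthm}, and neither requires finite generation, this presents no additional difficulty. Composing $\Cy{0}{-}$ with $\sfT$ yields a triangulated equivalence $\HF(\Prj{S},f)\xrightarrow{\simeq}\sGPrj(R)$, as desired.
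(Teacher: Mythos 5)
Your approach is essentially the paper's: combine Theorem \ref{mainthm} with the triangulated equivalence $\Ktac(\Prj{R})\simeq \sGPrj(R)$, which the paper simply cites as \cite[Example 3.10]{CET18} rather than sketching. Your unpacking of that equivalence via $\Cy{0}{-}$ is the intended argument, though the faithfulness step is a bit loosely attributed---\cite[Proposition 3.3(b)]{CET18} concerns liftings of the identity being homotopy equivalences, while faithfulness of $\Cy{0}{-}$ requires the separate (standard) argument that a chain map whose induced map on $0$-cycles factors through a projective is null-homotopic.
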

\begin{proof}
Combine Theorem \ref{mainthm} with the equivalence between $\Ktac(\Prj{R})$ and the stable category of Gorenstein projective $R$-modules; see e.g., \cite[Example 3.10]{CET18}.
\end{proof}

There are dual results for injective factorizations:
\begin{prp}\label{gitomf}
Assume $S$ is a regular local ring, let $f\in S$ be nonzero, and set $R=S/(f)$. 
If $M$ is a Gorenstein injective $R$-module, then there exists an injective factorization $(I,d)\in \F(\Inj{S},f)$ with $\ker(d_1)=M$.
\end{prp}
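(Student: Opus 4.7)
The plan is to run the proof of Proposition \ref{gptomf} in formal dual, replacing projective resolutions with injective coresolutions and regularity with divisibility.

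First, I would establish that $\id_S M = 1$. The injective dual of the Bennis--Mahdou change-of-rings formula invoked in Proposition \ref{gptomf} gives $\Gid_S M = \Gid_R M + 1$ whenever $f$ is $S$-regular and $M$ is an $R$-module; see \cite{BM10}. Since $M$ is Gorenstein injective over $R$, this yields $\Gid_S M = 1$. Because $S$ is a regular local ring, every $S$-module has finite injective dimension, so the injective analogue of \cite[4.4.7]{Chr00} forces $\id_S M = \Gid_S M = 1$. Consequently, $M$ fits into a short exact sequence
\[\xymatrix{0 \ar[r] & M \ar[r] & I_1 \ar[r]^{d_1} & I_0 \ar[r] & 0}\]
with $I_0, I_1 \in \Inj{S}$.

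Second, I would produce the companion map $d_0 : I_0 \to I_1$ that turns $(I,d)$ into an $\Inj{S}$-factorization of $f$. Injectivity of $I_1$ makes the application of $\Hom_S(-,I_1)$ to the sequence above exact:
\[\xymatrix{0 \ar[r] & \Hom_S(I_0, I_1) \ar[r] & \Hom_S(I_1, I_1) \ar[r] & \Hom_S(M, I_1) \ar[r] & 0.}\]
Because $fM = 0$, the endomorphism $f1^{I_1}$ vanishes on $M$, so it lifts to a map $d_0 \in \Hom_S(I_0, I_1)$ satisfying $d_0 d_1 = f1^{I_1}$. From $(d_1 d_0) d_1 = d_1 (d_0 d_1) = f \, d_1 = (f 1^{I_0}) d_1$ and the surjectivity of $d_1$ one then cancels on the right to obtain $d_1 d_0 = f1^{I_0}$. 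Hence $(I,d) \in \F(\Inj{S}, f)$ with $\ker(d_1) = M$ by construction.

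The main obstacle is the first step. Pinning down the length of an $S$-injective coresolution of $M$ at exactly one requires the change-of-rings formula for Gorenstein injective dimension; once this is granted, the construction of $d_0$ and the verification of the factorization identities are formally dual to the projective case and present no new difficulties.
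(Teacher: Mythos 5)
Your proposal is correct and takes essentially the same approach as the paper, which simply says the proof is dual to that of Proposition~\ref{gptomf} with \cite[Theorem 4.2]{BM10} replacing \cite[Theorem 4.1]{BM10} and \cite[6.2.6]{Chr00} replacing \cite[4.4.7]{Chr00}. You have spelled out this dual argument explicitly: the change-of-rings step pinning down $\id_S M = 1$, the exactness of $\Hom_S(-,I_1)$ applied to the coresolution, and the right-cancellation via surjectivity of $d_1$ are all the intended dual of the projective construction.
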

\begin{proof}
Dual to the proof of Proposition \ref{gptomf}, where one instead uses \cite[Theorem 4.2]{BM10} in place of \cite[Theorem 4.1]{BM10} and \cite[6.2.6]{Chr00} in place of \cite[4.4.7]{Chr00}.
\end{proof}

\begin{thm}\label{mainthm_inj}
Assume $S$ is a regular local ring, let $f\in S$ be nonzero, and set $R=S/(f)$. 
There is a triangulated equivalence
\[\xymatrix{
\widetilde{\sfT}:\HF(\Inj{S},f)\ar[r]^-{\simeq} & \Ktac(\Inj{R})
}\]
given by the functor from Theorem \ref{fullyfaithful_div}.
\end{thm}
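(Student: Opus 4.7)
The plan is to mirror the proof of Theorem \ref{mainthm} in its dual (divisibility) form. First, I would apply Theorem \ref{fullyfaithful_div} with $\sfC=\Inj{S}$ to conclude that $\widetilde{\sfT}$ is full and faithful. By Remark \ref{prj_inj_eq}, the equality $\Hom_S(R,\Inj{S})=\Inj{R}$ holds, so the stated codomain $\Ktac(\Inj{R})$ of $\widetilde{\sfT}$ coincides with what is produced by Theorem \ref{fullyfaithful_div}.

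For essential surjectivity, let $T\in \Ktac(\Inj{R})$. The module $\Cy{0}{T}$ is Gorenstein injective by Definition \ref{tac_Gor}, so Proposition \ref{gitomf} yields an injective factorization $(I,d)\in \F(\Inj{S},f)$ with $\ker(d_1)=\Cy{0}{T}$. By the construction of $\widetilde{\sfT}$ in Theorem \ref{fullyfaithful_div} (using Proposition \ref{acyclicW_div}), and the fact that $\ker(d_1)$ is an $R$-module sitting inside $(0:_{I_1}f)\cong\Hom_S(R,I_1)$, one has $\Cy{0}{\widetilde{\sfT}(I,d)}=\ker(d_1)=\Cy{0}{T}$.

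Finally, I would lift the identity $\Cy{0}{\widetilde{\sfT}(I,d)}\xrightarrow{=}\Cy{0}{T}$ to a degree-$0$ chain map $\psi:\widetilde{\sfT}(I,d)\to T$ via the dual of \cite[Lemma 3.1]{CET18}, and then invoke the dual of \cite[Proposition 3.3(b)]{CET18} to conclude that $\psi$ is a homotopy equivalence. The main subtlety is confirming that these lifting and homotopy-equivalence results admit dual formulations in the present setting; however, since the original arguments rely only on the self-orthogonality of the ambient category---which is guaranteed for $\Hom_S(R,\Inj{S})=\Inj{R}$ by Proposition \ref{change_of_rings}---no essential new difficulty arises.
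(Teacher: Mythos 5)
Your proposal is correct and follows exactly the route the paper takes: the paper's own proof of Theorem~\ref{mainthm_inj} is the one-liner ``Similar to the proof of Theorem~\ref{mainthm}; appeal instead to Theorem~\ref{fullyfaithful_div} and Proposition~\ref{gitomf},'' and your write-up is precisely that argument spelled out, including the appeal to Remark~\ref{prj_inj_eq} for $\Hom_S(R,\Inj{S})=\Inj{R}$. One small remark: you need not invoke \emph{duals} of \cite[Lemma~3.1]{CET18} and \cite[Proposition~3.3(b)]{CET18}, as those results are stated uniformly for $\sfW$-totally acyclic complexes over a self-orthogonal category $\sfW$ (the definition of $\sfW$-total acyclicity already treats $\Hom(-,\sfW)$ and $\Hom(\sfW,-)$ symmetrically), so they apply verbatim with $\sfW=\Inj{R}$.
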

\begin{proof}
Similar to the proof of Theorem \ref{mainthm}; appeal instead to Theorem \ref{fullyfaithful_div} and Proposition \ref{gitomf}.
\end{proof}

\begin{cor}\label{GIcor}
Assume $S$ is a regular local ring, let $f\in S$ be nonzero, and set $R=S/(f)$. 
There is a triangulated equivalence between $\HF(\Inj{S},f)$ and the stable category of Gorenstein injective $R$-modules.
\end{cor}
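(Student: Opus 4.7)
The plan is to follow exactly the strategy used in the proof of Corollary \ref{GPcor}, but on the injective side: combine the equivalence of Theorem \ref{mainthm_inj} with the standard identification of $\Ktac(\Inj{R})$ as a model for the stable category of Gorenstein injective $R$-modules.

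More precisely, the first ingredient is the triangulated equivalence
\[
\widetilde{\sfT}:\HF(\Inj{S},f)\xrightarrow{\ \simeq\ }\Ktac(\Inj{R})
\]
supplied by Theorem \ref{mainthm_inj}. The second ingredient is the well-known triangulated equivalence between $\Ktac(\Inj{R})$ and the stable category $\sGInj{R}$ of Gorenstein injective $R$-modules, given on objects by $T\mapsto \Cy{0}{T}$; this is the injective dual of the projective analogue cited in the proof of Corollary \ref{GPcor} (see, e.g., the injective case of \cite[Example 3.10]{CET18}). Composing the two yields the desired triangulated equivalence
\[
\HF(\Inj{S},f)\xrightarrow{\ \simeq\ }\sGInj{R}.
\]

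There is essentially no obstacle here: both ingredients are already in place in the excerpt and in the cited literature, and the proof reduces to composing two triangulated equivalences. The only point worth a brief remark is to note that the second equivalence is indeed triangulated (which follows from the same formal framework that produces the projective version), so that the composite is a triangulated equivalence as required.
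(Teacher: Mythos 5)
Your argument is exactly the paper's: compose the equivalence of Theorem \ref{mainthm_inj} with the standard equivalence between $\Ktac(\Inj{R})$ and the stable category of Gorenstein injective $R$-modules. The only cosmetic difference is that the paper cites \cite[Proposition 7.2]{Kra05} for the second equivalence rather than the injective analogue of \cite[Example 3.10]{CET18}, but both are valid references for the same fact.
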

\begin{proof}
Combine Theorem \ref{mainthm_inj} and the equivalence between $\Ktac(\Inj{R})$ and the stable category of Gorenstein injective $R$-modules; see \cite[Proposition 7.2]{Kra05}. 
\end{proof}

\section{An equivalence for flat-cotorsion factorizations}\label{equiv_fc}
\noindent
In this section, assume $S$ is a commutative noetherian ring. 
We give an equivalence in the case of the self-orthogonal category $\FC{S}$ of flat-cotorsion $S$-modules (that is, the category of $S$-modules that are both flat and cotorsion). 
The approach is similar to the previous section, but requires some extra care; in particular, we must establish a fact corresponding to the one from \cite{BM10} used above. 

Denote by $M^\wedge_\fp=\invlim(S/\fp^n\otimes_S M)$ the $\fp$-adic completion of an $S$-module $M$. By \cite{Eno84}, an $S$-module $M$ is flat-cotorsion if and only if it is isomorphic to a product over $\fp\in \Spec S$ of completions of free $S_\fp$-modules, that is, $M\cong \prod_{\fp\in \Spec S}(\bigoplus_{B(\fp)}S_\fp)^\wedge_\fp$ for some sets $B(\fp)$. 

\begin{lem}\label{FC_changerings}
Let $\pi:S\to R$ be a surjective ring homomorphism. Then we have an equality $R\otimes_S \FC{S}=\FC{R}$.
\end{lem}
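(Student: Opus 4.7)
The plan is to appeal to Enochs' structural classification \cite{Eno84} (recalled in the paragraph preceding the lemma) and check that the two sides agree prime-by-prime after reducing to this standard form. Write $I = \ker(\pi)$, so that $R = S/I$ and $R \otimes_S M \cong M/IM$ for any $S$-module $M$; the surjection $\pi$ induces a bijection $\Spec R \leftrightarrow V(I) \subseteq \Spec S$, $\fq \leftrightarrow \fp$, for which $R_\fq \cong S_\fp/IS_\fp$ and the $\fq$-adic topology on $R_\fq$ is the quotient of the $\fp$-adic topology on $S_\fp$.

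For the inclusion $R \otimes_S \FC{S} \subseteq \FC{R}$, take $M = \prod_{\fp \in \Spec S} (\bigoplus_{B(\fp)} S_\fp)^\wedge_\fp \in \FC{S}$ in Enochs form. Since $S$ is noetherian, $I$ is finitely generated, so $IM = \prod_\fp IM_\fp$ and thus $R \otimes_S M \cong \prod_\fp M_\fp/IM_\fp$. For $\fp \notin V(I)$ some element of $I$ is a unit in $S_\fp$, hence in its completion, so $M_\fp/IM_\fp = 0$. For $\fp \in V(I)$, writing $F = \bigoplus_{B(\fp)} S_\fp$, the key identification is
\[ F^\wedge_\fp / I F^\wedge_\fp \;\cong\; \invlim_n F^\wedge_\fp / (\fp^n + I) F^\wedge_\fp \;\cong\; \invlim_n F/(\fp^n + I) F \;\cong\; \Bigl(\bigoplus_{B(\fp)} R_\fq\Bigr)^\wedge_\fq, \]
which places the $\fp$-factor in Enochs form over $R$; taking the product over $\fp \in V(I)$ shows $R \otimes_S M \in \FC{R}$.

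For the reverse inclusion, given $N = \prod_{\fq \in \Spec R} (\bigoplus_{B(\fq)} R_\fq)^\wedge_\fq$ in $\FC{R}$, I reverse the construction by setting $M = \prod_{\fp \in V(I)} (\bigoplus_{B(\fp/I)} S_\fp)^\wedge_\fp \in \FC{S}$ (with empty summand outside $V(I)$); the computation of the previous paragraph then gives $R \otimes_S M \cong N$, so $N \in R \otimes_S \FC{S}$.

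The main technical obstacle is justifying the displayed string of isomorphisms, i.e.\ that the operation $(-)^\wedge_\fp / I(-)$ applied to a free $S_\fp$-module produces the $\fq$-adic completion of its base change to $R_\fq$. The crux is that $F^\wedge_\fp$ is $\fp$-adically complete, hence $(\fp+I)$-adically complete, and that $IF^\wedge_\fp$ is closed, so the mod-$I$ reduction is automatically complete in the $\fq$-adic topology; both steps rest on $I$ being finitely generated, which is the only point where the noetherian hypothesis on $S$ is used. Everything else is formal bookkeeping with Enochs' theorem and the fact that finitely generated $I$ commutes with products in the formula $I\prod_\fp M_\fp = \prod_\fp IM_\fp$.
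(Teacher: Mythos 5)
Your proposal follows essentially the same route as the paper's proof: both reduce via Enochs' classification to a single displayed isomorphism $R\otimes_S\bigl(\prod_\fp(\bigoplus_{B(\fp)}S_\fp)^\wedge_\fp\bigr)\cong\prod_\fp(\bigoplus_{B(\fp)}R_{\pi(\fp)})^\wedge_{\pi(\fp)}$ (with factors vanishing outside $V(\ker\pi)$), and then read off both inclusions. Where the paper compresses the verification of this isomorphism into the remark that $R$ is finitely presented as an $S$-module, you unpack it — commuting $-\otimes_S R$ past the product and then arguing via the inverse-limit description that $(-)^\wedge_\fp/I(-)$ on a free $S_\fp$-module agrees with $\fq$-adic completion of the base change — which is a legitimate elaboration rather than a different strategy.
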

\begin{proof}
First notice that for a flat-cotorsion $S$-module $\prod_{\fp\in \Spec S}(\bigoplus_{B(\fp)}S_\fp)^\wedge_\fp$, there is an isomorphism
$$\textstyle{R\otimes_S\left(\prod_{\fp\in \Spec S}(\bigoplus_{B(\fp)}S_\fp)^\wedge_\fp\right)\cong \prod_{\fp\in \Spec S}(\bigoplus_{B(\fp)}R_{\pi(\fp)})^\wedge_{\pi(\fp)}},$$
since $R$ is finitely presented as an $S$-module. It is now immediate that there is an inclusion $R\otimes_S\FC{S}\subseteq \FC{R}$. The other inclusion follows by observing that every flat-cotorsion $R$-module can be expressed in a form given by the right side of this isomorphism, since $\Spec R=\pi(\Spec S)$.
\end{proof}

The next lemma is needed in place of the change of rings facts for Gorenstein projective and Gorenstein injective dimensions from \cite{BM10}. As in \cite[Definition 4.3]{CET18}, refer to a $\FC{S}$-totally acyclic complex as a \emph{totally acyclic complex of flat-cotorsion $S$-modules} and a $\FC{S}$-Gorenstein module as a \emph{Gorenstein flat-cotorsion $S$-module}; see Definition \ref{tac_Gor}. Gorenstein flat-cotorsion $S$-modules are by \cite[Theorem 5.2]{CET18} precisely the modules that are both Gorenstein flat---that is, isomorphic to $\Cy{0}{F}$ for some \textbf{F}-totally acyclic complex $F$ of flat $S$-modules---and cotorsion.
\begin{lem}\label{gfcdim1}
Let $f\in S$ be a regular element and set $R=S/(f)$. Let $M$ be a Gorenstein flat-cotorsion $R$-module. There is an exact sequence of $S$-modules
\[\xymatrix{
0\ar[r] & M' \ar[r] & F \ar[r] & M\ar[r] & 0,
}\]
with $M'$ a Gorenstein flat-cotorsion $S$-module and $F$ a flat-cotorsion $S$-module.
\end{lem}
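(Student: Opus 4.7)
The plan is to mirror the strategy of Proposition \ref{gptomf}: approximate $M$ by a flat-cotorsion $R$-module, lift the approximation along $S\twoheadrightarrow R$ via Lemma \ref{FC_changerings}, and check that the resulting $S$-module kernel is Gorenstein flat-cotorsion. Concretely, since $M$ is Gorenstein flat-cotorsion over $R$, Definition \ref{tac_Gor} gives $M=\Cy{0}{T}$ for an $\FC{R}$-totally acyclic complex $T$; acyclicity then yields a short exact sequence
$$
0\to N\to F_R\to M\to 0
$$
of $R$-modules with $F_R=T_1\in\FC{R}$ and $N=\Cy{1}{T}$ itself Gorenstein flat-cotorsion over $R$. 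By Lemma \ref{FC_changerings}, $F_R=R\otimes_S F$ for some $F\in\FC{S}$, and because $f$ is $F$-regular the kernel of the projection $F\twoheadrightarrow F_R$ is $fF\cong F$. Setting $M'=\ker(F\twoheadrightarrow F_R\twoheadrightarrow M)$ supplies the desired sequence $0\to M'\to F\to M\to 0$, and the snake lemma produces the companion sequence $0\to F\to M'\to N\to 0$ of $S$-modules.

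Cotorsion-ness of $M'$ over $S$ is the easier of the two properties to verify. The module $F$ is cotorsion by construction. For $N$, Lemma \ref{Rees}(ii), applied to any flat $S$-module $G$ (noting $f$ is $G$-regular), yields
$$
\Ext_S^1(G,N)\cong \Ext_R^1(R\otimes_SG,N)=0,
$$
since $R\otimes_SG$ is flat over $R$ and $N$ is cotorsion over $R$. Thus $N$ is cotorsion over $S$, and since the class of cotorsion modules is closed under extensions, so is $M'$.

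The hard part is establishing that $M'$ is Gorenstein flat over $S$; combined with cotorsion-ness, \cite[Theorem 5.2]{CET18} will then upgrade this to Gorenstein flat-cotorsion. Since the class of Gorenstein flat modules over the noetherian ring $S$ is closed under extensions, it suffices to prove that $N$---Gorenstein flat over $R$---remains Gorenstein flat over $S$. This change-of-rings statement is the main obstacle, and plays the role for Gorenstein flat modules that \cite[Theorem 4.1]{BM10} played for Gorenstein projectives in Proposition \ref{gptomf}. I would prove it by exhibiting an \textbf{F}-totally acyclic complex of flat $S$-modules having $N$ as a cycle: begin with an \textbf{F}-totally acyclic complex $T'$ of flat $R$-modules with $N=\Cy{0}{T'}$, resolve each $T'_i$ by the length-one flat $S$-resolution $0\to \widetilde{T}'_i\xrightarrow{f}\widetilde{T}'_i\to T'_i\to 0$ furnished by Remark \ref{pdidfd}, and take the totalization of the resulting double complex. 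The delicate verification is that this totalization remains acyclic after tensoring with any injective $S$-module, which I expect to follow from \textbf{F}-total acyclicity of $T'$ over $R$ together with the change-of-rings sequence $0\to S\xrightarrow{f}S\to R\to 0$ and an application of the Rees-type isomorphism of Lemma \ref{Rees}.
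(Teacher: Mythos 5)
Your construction of the short exact sequence $0\to M'\to F\to M\to 0$ and the companion sequence $0\to F\to M'\to N\to 0$ is sound, and your cotorsion argument for $M'$ (via Lemma \ref{Rees}(ii) and extension closure) works. The Gorenstein flatness argument, however, contains a genuine error. You reduce to showing that $N$, being Gorenstein flat over $R$, is Gorenstein flat over $S$, and you cite \cite[Theorem 4.1]{BM10} as the model for such a statement. But that theorem gives $\GPdim_SM=\GPdim_RM+1$ (and its Gorenstein flat analogue likewise raises the dimension by one); it does \emph{not} say that Gorenstein flatness over $R$ persists over $S$. Indeed, the claim is false: take $S=k[[x,y]]/(y^2)$, $f=x$, $R=k[y]/(y^2)$, $M=k$, and $F_R=R$. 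Then $N=yR\cong k$ as an $S$-module, which is Gorenstein flat over the Artinian Gorenstein ring $R$ but has $\GFdim_Sk=\dim S=1$ over the one-dimensional Gorenstein ring $S$. So $N$ is not Gorenstein flat over $S$, and the extension-closure step collapses. (The totalization construction is independently problematic: the degree-zero cycle of that totalization surjects onto $N$ with a nontrivial kernel, so it does not exhibit $N$ itself as a cycle of a complex of flat $S$-modules; a toy computation over $k[[x]]\twoheadrightarrow k$ already shows the cycles of the totalization are torsion-free.)

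The viable version of your strategy is to apply the change of rings to $M$ rather than to $N$: the correct statement $\GFdim_SM\le\GFdim_RM+1=1$, combined with $F$ flat and $0\to M'\to F\to M\to 0$, yields directly that $M'$ is Gorenstein flat over $S$, bypassing $N$ entirely. This is morally what the paper does, but in a self-contained way: rather than invoking a change-of-rings theorem for Gorenstein flat dimension, the paper builds a totally acyclic complex of flat-cotorsion $S$-modules explicitly, taking a flat cover $F_i\to\Cy{i}{T}$ over $S$ of each cycle of the $\FC{R}$-totally acyclic complex $T$, splicing the kernels together, and checking total acyclicity via Wakamatsu's Lemma, a snake-lemma diagram, and Lemma \ref{Rees}(i). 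That construction produces $K_0=\ker(F_0\to M)$ as the cycle of a genuine totally acyclic complex of flat-cotorsion $S$-modules, with no appeal to a Gorenstein flat change-of-rings result.
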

\begin{proof}
As $M$ is a Gorenstein flat-cotorsion $R$-module, there is a totally acyclic complex $T$ of flat-cotorsion $R$-modules such that $\Cy{0}{T}=M$. For each $i\in\ZZ$, we may find---because flat covers exist for all modules \cite{BEBE01}---a surjective flat cover $F_i\to \Cy{i}{T}$ over $S$; the kernel $K_i=\ker(F_i\to \Cy{i}{T})$ is cotorsion by Wakamatsu's Lemma \cite[Lemma 2.1.1]{Xu96}. In fact, since $\Cy{i}{T}$ is a cotorsion $R$-module, it is also a cotorsion $S$-module for each $i\in \ZZ$ by \cite[Proposition 3.3.3]{Xu96}, hence $F_i$ is flat-cotorsion for each $i\in \ZZ$. Indeed, $\Cy{i}{T}$ being a cotorsion $S$-module also yields $\Ext_S^1(F_{i-1},\Cy{i}{T})=0$; from this and the snake lemma we obtain, for each $i\in \ZZ$, the following commutative diagram with exact rows and columns:
\[\xymatrix{
 & 0 \ar[d] & 0 \ar[d] & 0 \ar[d]\\
0 \ar[r] & K_i \ar[r]\ar[d] & T_i' \ar[r]\ar[d] & K_{i-1} \ar[r]\ar[d] & 0\\
0 \ar[r] & F_i \ar[r]\ar[d] & F_i\oplus F_{i-1} \ar[r]\ar[d] & F_{i-1} \ar[r]\ar[d] & 0\\
0 \ar[r] & \Cy{i}{T} \ar[r]\ar[d] & T_i \ar[r]\ar[d] & \Cy{i-1}{T} \ar[r]\ar[d] & 0\\
 & 0 & 0 & 0
}\]
As $K_i$ and $K_{i-1}$ are cotorsion $S$-modules, so is $T_i'$. Additionally, as $T_i$ is a flat $R$-module, $\fd_S T_i=1$; see Remark \ref{pdidfd}. From \cite[2.4.F]{AF91}, we obtain that $T_i'$ is a flat $S$-module.

Now glue together the short exact sequences from the top rows of these diagrams to obtain an acyclic complex $T'$ of flat-cotorsion $S$-modules with $\Cy{i}{T'}=K_i$ for each $i\in \ZZ$. Fix a flat-cotorsion $S$-module $N$. Evidently, as each $K_i$ is cotorsion, we obtain $\Hom_S(N,T')$ is acyclic. Moreover, for each $i\in \ZZ$,
$$\Ext_S^1(K_i,N)\cong \Ext_S^2(\Cy{i}{T},N)\cong \Ext_R^1(\Cy{i}{T},R\otimes_SN)=0,$$
where the first isomorphism follows from the left vertical exact sequence in the diagram and the second follows from Lemma \ref{Rees}(i). For the last equality, note that as $N$ is a flat-cotorsion $S$-module, we have $R\otimes_S N$ is a flat-cotorsion $R$-module by Lemma \ref{FC_changerings}. It now follows that $\Hom_S(T',N)$ is also acyclic.  Thus $T'$ is a totally acyclic complex of flat-cotorsion $S$-modules. In particular, $\Cy{0}{T'}=K_0$ is a Gorenstein flat-cotorsion $S$-module, and the claim follows.
\end{proof}

\begin{prp}\label{gftomf}
Assume $S$ is a regular local ring, let $f\in S$ be nonzero, and set $R=S/(f)$. If $M$ is a Gorenstein flat-cotorsion $R$-module, then there exists a flat-cotorsion factorization $(F,d)\in \F(\FC{S},f)$ with $\coker(d_1)=M$.
\end{prp}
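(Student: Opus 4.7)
The plan is to mirror the argument used for Proposition \ref{gptomf}, with Lemma \ref{gfcdim1} serving as the crucial substitute for the change-of-rings formula from \cite{BM10}. The goal is to produce a short exact sequence $0\to F_1\xrightarrow{d_1} F_0\to M\to 0$ in which $F_0$ and $F_1$ are flat-cotorsion over $S$, and then to lift multiplication by $f$ to a map $d_0\colon F_0\to F_1$ exactly as before.

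First I would apply Lemma \ref{gfcdim1} to obtain an exact sequence $0\to M'\to F_0\to M\to 0$ with $F_0\in \FC{S}$ and $M'$ Gorenstein flat-cotorsion over $S$. The key preliminary step (and the main obstacle in the argument) is to upgrade $M'$ from Gorenstein flat-cotorsion to genuinely flat-cotorsion. For this I use that $S$ is a regular local ring, so every $S$-module has finite flat dimension (bounded by $\dim S$). A Gorenstein flat module of finite flat dimension is flat---this is the standard fact that $\GFdim_S(-)=\fd_S(-)$ whenever the latter is finite---so $M'$ is flat. Since $M'$ is also cotorsion (being Gorenstein flat-cotorsion), it lies in $\FC{S}$. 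Set $F_1=M'$ and let $d_1\colon F_1\hookrightarrow F_0$ be the inclusion; then $d_1$ is injective and $\coker(d_1)=M$.

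Next I would construct $d_0$. Because $\FC{S}$ is self-orthogonal, applying $\Hom_S(F_0,-)$ to the sequence $0\to F_1\xrightarrow{d_1} F_0\to M\to 0$ yields the exact sequence
\[
0\to \Hom_S(F_0,F_1)\to \Hom_S(F_0,F_0)\to \Hom_S(F_0,M)\to 0.
\]
Since $fM=0$, the element $f\cdot 1^{F_0}\in \Hom_S(F_0,F_0)$ is sent to zero in $\Hom_S(F_0,M)$, hence lifts to some $d_0\colon F_0\to F_1$ with $d_1 d_0=f\cdot 1^{F_0}$. Then $d_1(d_0 d_1)=f\cdot 1^{F_0}\cdot d_1=d_1\cdot f\cdot 1^{F_1}$, and injectivity of $d_1$ forces $d_0 d_1=f\cdot 1^{F_1}$.

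Therefore $(F,d)$ with $F_0,F_1\in \FC{S}$ and the two maps above is a flat-cotorsion factorization of $f$ with $\coker(d_1)=M$, as required. The only substantive new input compared with Proposition \ref{gptomf} is the promotion of $M'$ from Gorenstein flat-cotorsion to flat-cotorsion via finite flat dimension; the rest is formally identical to the projective case, using the self-orthogonality of $\FC{S}$ in place of that of $\Prj{S}$.
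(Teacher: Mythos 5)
Your proof is correct and follows essentially the same route as the paper's: invoke Lemma \ref{gfcdim1}, upgrade the Gorenstein flat-cotorsion $S$-module $M'$ to genuinely flat-cotorsion using that $S$ is regular (so flat dimension is finite, forcing a Gorenstein flat module to be flat via the equality $\GFdim=\fd$ when finite), and then lift $f\cdot 1^{F_0}$ using self-orthogonality of $\FC{S}$. The only cosmetic difference is that the paper first bounds $\fd_S M\le 1$ and then applies a two-of-three argument to conclude $F_1$ is flat, whereas you apply the finiteness-of-flat-dimension reasoning directly to $M'=F_1$; both steps rest on the same cited facts.
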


\begin{proof}
Let $M$ be a Gorenstein flat-cotorsion $R$-module. 
Lemma \ref{gfcdim1} yields an exact sequence
\[\xymatrix{
0\ar[r] & F_1 \ar[r]^{d_1} & F_0 \ar[r] & M\ar[r] & 0,
}\]
with $F_0$ a flat-cotorsion $S$-module and $F_1$ a Gorenstein flat-cotorsion $S$-module. By \cite[Theorem 5.2]{CET18}, $F_1$ is cotorsion and Gorenstein flat. As $S$ is regular, we have $\fd_SM<\infty$, hence $\fd_SM=\GFdim_SM\leq 1$ by \cite[5.2.8]{Chr00}. Thus $F_1$ is also flat \cite[2.4.F]{AF91}, hence flat-cotorsion. 

As in Proposition \ref{gptomf}, a standard construction applied to the sequence above provides a flat-cotorsion factorization $(F,d)$ of $f$ with $\coker (d_1)=M$.
\end{proof}

\begin{thm}\label{mainthm_fc}
Assume $S$ is a regular local ring, let $f\in S$ be nonzero, and set $R=S/(f)$. There is a triangulated equivalence
\[\xymatrix{
\sfT: \HF(\FC{S},f)\ar[r]^-{\simeq} & \Ktac(\FC{R})
}\]
given by the functor in Proposition \ref{functor}.
\end{thm}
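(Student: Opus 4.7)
The plan is to run the same template used for Theorem \ref{mainthm}, replacing $\Prj{S}$ by $\FC{S}$ throughout, since all the required ingredients have already been assembled earlier in the paper. First I would apply Theorem \ref{fullyfaithful} with $\sfC=\FC{S}$ to conclude that $\sfT$ is a full and faithful triangulated functor. The hypotheses hold: $\FC{S}$ is additive, it is self-orthogonal by the example in Section 1, and $f$ is $\FC{S}$-regular because flat-cotorsion modules are flat, hence torsion-free (again from the example). To see that the target of $\sfT$ is the claimed one, I would invoke Lemma \ref{FC_changerings} to identify $R\otimes_S\FC{S}=\FC{R}$, so that $\sfT$ takes values in $\Ktac(\FC{R})$.

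It remains to establish essential surjectivity. Given $T\in\Ktac(\FC{R})$, the module $M=\Cy{0}{T}$ is, by definition, a Gorenstein flat-cotorsion $R$-module. I would then apply Proposition \ref{gftomf} to produce a flat-cotorsion factorization $(F,d)\in\F(\FC{S},f)$ such that $\coker(d_1)=M$. By the cycle computation in Proposition \ref{acyclicW}, the complex $\sfT(F,d)=T^F$ is a $\FC{R}$-totally acyclic complex with $\Cy{0}{T^F}\cong\coker(d_1)=\Cy{0}{T}$.

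Finally, to upgrade this identification of cycles to a homotopy equivalence $T^F\simeq T$, I would argue exactly as at the end of the proof of Theorem \ref{mainthm}: use \cite[Lemma 3.1]{CET18} to lift the identity $\Cy{0}{T^F}\xrightarrow{=}\Cy{0}{T}$ to a chain map $\phi:T^F\to T$, and then apply \cite[Proposition 3.3(b)]{CET18} to conclude that $\phi$ is a homotopy equivalence. Both citations are valid here because $\FC{R}$ is self-orthogonal and both complexes are $\FC{R}$-totally acyclic.

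The main nontrivial content has already been settled earlier—most critically Lemma \ref{gfcdim1} and Proposition \ref{gftomf}, which together produce the flat-cotorsion factorization realizing a given Gorenstein flat-cotorsion module—so this final step is essentially a bookkeeping argument running the $\Prj{S}$ proof in the flat-cotorsion setting. The only place one must be a little careful is checking that the ambient hypotheses of \cite[Lemma 3.1]{CET18} and \cite[Proposition 3.3(b)]{CET18} apply with $\sfW=\FC{R}$ in place of $\Prj{R}$, but this is immediate from self-orthogonality of $\FC{R}$, which in turn follows from the Section 1 example or from Lemma \ref{FC_changerings} combined with Proposition \ref{change_of_rings}(i).
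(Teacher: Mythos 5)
Your proposal is correct and follows exactly the approach in the paper: the paper's proof simply says it is similar to Theorem \ref{mainthm}, substituting Proposition \ref{gftomf} for Proposition \ref{gptomf} and Lemma \ref{FC_changerings} for Remark \ref{prj_inj_eq}, which is precisely what you have spelled out.
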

\begin{proof}
Similar to the proof of Theorem \ref{mainthm}, using Proposition \ref{gftomf} in place of Proposition \ref{gptomf}, and Lemma \ref{FC_changerings} in place of Remark \ref{prj_inj_eq}.
\end{proof}

\begin{cor}\label{GFcor}
Assume $S$ is a regular local ring, let $f\in S$ be nonzero, and set $R=S/(f)$. There is a triangulated equivalence between $\HF(\FC{S},f)$ and the stable category of Gorenstein flat-cotorsion $R$-modules.
\end{cor}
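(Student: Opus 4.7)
The plan is to follow the template of Corollaries \ref{GPcor} and \ref{GIcor}. First I would apply Theorem \ref{mainthm_fc} to obtain the triangulated equivalence $\sfT:\HF(\FC{S},f) \xrightarrow{\simeq} \Ktac(\FC{R})$. It then suffices to compose $\sfT$ with a triangulated equivalence between $\Ktac(\FC{R})$ and the stable category of Gorenstein flat-cotorsion $R$-modules, so the corollary reduces to establishing this second equivalence.

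For that equivalence, the natural candidate is the cycle functor $T \mapsto \Cy{0}{T}$, which by Definition \ref{tac_Gor} sends a totally acyclic complex of flat-cotorsion $R$-modules to a Gorenstein flat-cotorsion $R$-module. Since $\FC{R}$ is additive and self-orthogonal, it plays the role of the projective-injective objects of a Frobenius exact structure on the category of Gorenstein flat-cotorsion $R$-modules, so the associated stable category is triangulated in the usual way. Essential surjectivity of the cycle functor is immediate from the definition of Gorenstein flat-cotorsion module. For the fully faithful part, the lifting statement \cite[Lemma 3.1]{CET18} and the homotopy-equivalence criterion \cite[Proposition 3.3]{CET18} — both already invoked in the proof of Theorem \ref{mainthm} and hence implicitly in that of Theorem \ref{mainthm_fc} — apply verbatim with $\sfC = \FC{R}$, yielding that homotopy classes of chain maps between $\FC{R}$-totally acyclic complexes correspond to morphisms of their $0$-cycles modulo those factoring through an object of $\FC{R}$.

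The main obstacle I expect is pinning down a clean reference for the equivalence $\Ktac(\FC{R}) \simeq \underline{\mathsf{GFC}}(R)$. In the previous two corollaries the analogous statements were cited from \cite[Example 3.10]{CET18} for $\Prj{R}$ and \cite[Proposition 7.2]{Kra05} for $\Inj{R}$, so the flat-cotorsion version will need to be either extracted from the general self-orthogonal framework of \cite{CET18}, which treats $\FC{R}$ on equal footing with the projective and injective cases, or assembled by hand from the lifting and homotopy-equivalence lemmas cited above. Once that equivalence is in place, composing with the equivalence of Theorem \ref{mainthm_fc} yields the desired triangulated equivalence and completes the proof.
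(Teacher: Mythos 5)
Your proposal is correct and follows the same route as the paper: compose the equivalence of Theorem \ref{mainthm_fc} with the equivalence between $\Ktac(\FC{R})$ and the stable category of Gorenstein flat-cotorsion $R$-modules. The reference you were missing for that second equivalence is \cite[Summary 5.7]{CET18}, which is exactly what the paper cites; your sketch of how to assemble it by hand from the cycle functor and the lifting lemmas in \cite{CET18} is a reasonable account of the content behind that citation.
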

\begin{proof}
This equivalence follows from Theorem \ref{mainthm_fc} and the triangulated equivalence between $\Ktac(\FC{R})$ and the stable category of Gorenstein flat-cotorsion $R$-modules given in \cite[Summary 5.7]{CET18}.
\end{proof}

One motivation for considering totally acyclic complexes of flat-cotorsion $R$-modules is their relation to the next analogue of the singularity category as described by Murfet and Salarian \cite{MS11}.  

The pure derived category of flat $S$-modules is defined as the Verdier quotient $\D(\Flat{S})=\K(\Flat{S})/\Kpac(\Flat{S})$ of the homotopy category of flat $S$-modules by its subcategory of pure acyclic complexes of flat $S$-modules. Neeman proves in \cite[Theorem 1.2]{Nee08} that $\D(\Flat{S})$ is equivalent to $\K(\Prj{S})$, and moreover, Murfet and Salarian show \cite[Lemma 4.22]{MS11} that $\DFtac(\Flat{S})$, the subcategory of $\D(\Flat{S})$ of {\bf F}-totally acyclic complexes, is equivalent to $\Ktac(\Prj{S})$, assuming that $S$ is a commutative noetherian ring having finite Krull dimension.
\begin{cor}\label{purederivedcor}
Assume $S$ is a regular local ring, let $f\in S$ be nonzero, and set $R=S/(f)$. There is a triangulated equivalence
\[\xymatrix{
\HF(\FC{S},f)\ar[r]^-{\simeq} &\DFtac(\Flat{R}).
}\]
\end{cor}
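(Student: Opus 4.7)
The plan is to chain Theorem~\ref{mainthm_fc} with the triangulated equivalence $\Ktac(\FC{R}) \simeq \DFtac(\Flat{R})$ supplied by \cite[Summary 5.7]{CET18}. Theorem~\ref{mainthm_fc} gives $\HF(\FC{S},f) \simeq \Ktac(\FC{R})$ via the functor $\sfT$, and composition with this bridge equivalence produces the corollary. The hypotheses of the cited summary are met because $R=S/(f)$ is noetherian of finite Krull dimension, being a quotient of the regular local ring $S$.

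The bridge functor itself is the canonical composite
\[
\Ktac(\FC{R}) \hookrightarrow \K(\Flat{R}) \longrightarrow \D(\Flat{R}) = \K(\Flat{R})/\Kpac(\Flat{R}).
\]
First I would check it lands in $\DFtac(\Flat{R})$: the cycle modules of a $\FC{R}$-totally acyclic complex are Gorenstein flat-cotorsion, and in particular Gorenstein flat by \cite[Theorem 5.2]{CET18}, so the underlying complex of flats is \textbf{F}-totally acyclic in the sense of Murfet--Salarian. Faithfulness would be deduced from the fact that flat-cotorsion modules are pure-injective, which forces any pure acyclic complex with entries in $\FC{R}$ to be contractible. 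Fullness and essential surjectivity rely on a levelwise cotorsion-envelope construction that replaces an \textbf{F}-totally acyclic complex of flat $R$-modules by a $\FC{R}$-totally acyclic complex isomorphic to it in the pure derived category.

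The main obstacle in a self-contained derivation is this cotorsion replacement: producing from an arbitrary \textbf{F}-totally acyclic complex of flats a $\FC{R}$-totally acyclic complex that represents the same object of $\D(\Flat{R})$, and verifying that \textbf{F}-total acyclicity is preserved throughout. This is precisely the content of the summary theorem being invoked, so once it is cited, the composition of the two equivalences delivers the required triangulated equivalence.
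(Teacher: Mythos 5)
Your argument is exactly the paper's: compose the equivalence $\HF(\FC{S},f)\simeq\Ktac(\FC{R})$ from Theorem~\ref{mainthm_fc} with the equivalence $\Ktac(\FC{R})\simeq\DFtac(\Flat{R})$ of \cite[Summary 5.7]{CET18}. The extra discussion you give of what the bridge functor does and why its hypotheses hold is sound, but the logical content matches the paper's one-line proof.
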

\begin{proof}
Combine Theorem \ref{mainthm_fc} and \cite[Summary 5.7]{CET18}.
\end{proof}

\addtocontents{toc}{\protect\setcounter{tocdepth}{0}}
\section*{Acknowledgments}
\noindent
We are grateful to Lars Winther Christensen and Mark Walker for many discussions related to the topics of this paper, and also to the anonymous referee for many helpful suggestions.

\vspace{-2mm}

\end{document}